\newenvironment{ack}{\medskip{\it Acknowledgement.}}{}
\let\TeXchi\chi
\newbox\chibox
\chibox \hbox{\raise\dp0 \box 0 }
\def\chi{\copy\chibox}
\newtheorem{proposition}{Proposition}[section]
\newtheorem{theorem}{Theorem}[section]
\newtheorem{lemma}{Lemma}[section]
\newtheorem{corollary}{Corollary}[section]
\newtheorem{remark}{Remark}[section]
\numberwithin{equation}{section}
\numberwithin{theorem}{section}
\numberwithin{proposition}{section}
\numberwithin{lemma}{section}
\numberwithin{remark}{section}
\newcommand{\dsty}{\displaystyle}
\newcommand{\al}{\alpha}
\newcommand{\gm}{\gamma}
\newcommand{\dl}{\delta}
\newcommand{\lm}{\lambda}
\newcommand{\Lm}{\Lambda}
\newcommand{\varep}{\varepsilon}
\newcommand{\vp}{\varphi}
\newcommand{\sig}{\sigma}
\newcommand{\om}{\omega}
\newcommand{\Om}{\Omega}
\newcommand{\z}{\zeta}
\newcommand{\df}[1]{\buildrel\mbox{\small def}\over{#1}}
\newcommand{\rr}{\mathbb{R}}
\newcommand{\rn}{\rr^N}
\newcommand{\bl}[1]{\mathbf{#1}}
\newcommand{\dvg}{\operatorname{div}}
\newcommand{\essup}{\operatornamewithlimits{ess\,sup}}
\newcommand{\essinf}{\operatornamewithlimits{ess\,inf}}
\newcommand{\essosc}{\operatornamewithlimits{ess\,osc}}
\newcommand{\osc}{\operatornamewithlimits{osc}}
\newcommand{\loc}{\operatorname{loc}}
\newcommand{\dist}{\operatorname{dist}}
\newcommand{\pl}{\partial}
\newcommand{\intl}{\int\limits}
\def\Xint#1{\mathchoice
    {\XXint\displaystyle\textstyle{#1}}%
    {\XXint\textstyle\scriptstyle{#1}}%
    {\XXint\scriptstyle\scriptscriptstyle{#1}}%
    {\XXint\scriptscriptstyle\scriptscriptstyle{#1}}%
    \!\int}
\def\XXint#1#2#3{\setbox0=\hbox{$#1{#2#3}{\int}$}
    \vcenter{\hbox{$#2#3$}}\kern-0.5\wd0}
\def\bint{\Xint-}
\def\dashint{\Xint{\raise4pt\hbox to7pt{\hrulefill}}}
\def\dashiint{\bint\kern-0.15cm\bint}
\newcommand{\ovl}[3]{\int_{#1}^{#2}\kern-#3pt\raise4pt\hbox to7pt{\hrulefill}\ }
\newcommand{\ovll}[3]{\intl_{#1}^{#2}\kern-#3pt\raise4pt\hbox to7pt{\hrulefill}\ }
\newcommand{\tvl}[2]{\iint_{#1}\kern-#2pt\raise4pt\hbox to7pt{\hrulefill}\ }
\newcommand{\bye}{\end{document}}
\newcommand{\datap}{\{p,N,C_o,C_1\}}
\newcommand{\pto}{(x_o,t_o)}
\newcommand{\tvls}[2]{\iint_{#1}\kern-#2pt\raise4pt\hbox to15pt{\hrulefill}\ }
\newcommand{\ibtr}{\intl_0^t\!\intl_{K_{2\rho}}\kern-4pt}
\begin{document}
%%%%%%%%%%%%%%%%%%%%%%%%%%%%%%%%%%%%%%%%%%%%%%%%%%%
\title{A Boundary Estimate for Singular Sub-Critical\\ Parabolic Equations}
%%%%%%%%%%%%%%%%%%%%%%%%%%%%%%%%%%%%%%%%%%%%%%%%%%%
%%%%%%%%%%%%%%%%%%%%%%%%%%%%%%%%%%%%%%%%%%%%%%%%
\author{Ugo Gianazza\\
Dipartimento di Matematica ``F. Casorati", 
Universit\`a di Pavia\\ 
via Ferrata 5, 27100 Pavia, Italy\\
email: {\tt gianazza@imati.cnr.it}
%%%%%%%%%%%%%%%%%%%%%%%%%%%%%%%%%%%%%%%%%%%%%%%%%%
\and Naian Liao\\
%\thanks{Corresponding author; N. L. is partially supported by NSFC 11701054 grant.}\\
%%%%%%%%%%%%%%%%%%%%%%%%%%%%%%%%%%%%
Fachbereich Mathematik, Universit\"at Salzburg\\
Hellbrunner Str. 34, 5020 Salzburg, Austria\\
email: {\tt naian.liao@sbg.ac.at}
}
%%%%%%%%%%%%%%%%%%%%%%%%%%%%%%%%%%%%%%%%%%%%%%%%%%%
\date{}
\maketitle
%%%%%%%%%%%%%%%%%%%%%%%%%%%%%%%%%%%%%%%%%%%%%%%%%%%
\begin{abstract}
We prove an estimate on the modulus of continuity at a boundary point of a cylindrical domain for local weak solutions to singular parabolic equations of $p$-laplacian type, with $p$ in the sub-critical range $(1,\frac{2N}{N+1}]$. The estimate is given in terms of a Wiener-type integral, defined by a proper elliptic $p$-capacity. 
%%%%%%%%%%%%%%%%%%%%%%%%%%%%%%%%%%%%%%%%%%%%%%%%%%%
\vskip.2truecm
%%%%%%%%%%%%%%%%%%%%%%%%%%%%%%%%%%%%%%%%%%%%%%%%%%%
\noindent{\bf Mathematics Subject Classification (2020):} 
Primary 35K67, 35B65; Secondary 35B45, 35K20
\vskip.2truecm
%%%%%%%%%%%%%%%%%%%%%%%%%%%%%%%%%%%%%%%%%%%%%%%%%%%
\noindent{\bf Key Words}: Parabolic $p$-laplacian, boundary estimates, elliptic $p$-capacity, continuity, Wiener-type integral
%%%%%%%%%%%%%%%%%%%%%%%%%%%%%%%%%%%%%%%%%%%%%%%%%%%
\end{abstract}
%%%%%%%%%%%%%%%%%%%%%%%%%%%%%%%%%%%%%%%%%%%%%%%%%%%%%%%%%%%%%%%%%%%%%%%
%%%%%%%%%%%%%%%%%%%%%%%%%%%%%%%%%%%%%%%%%%%%%%%%%%%%%%%%%%%%%%%%%%%%%%%
\section{Introduction}%\label{S:intro}
%%%%%%%%%%%%%%%%%%%%%%%%%%%%%%%%%%%%%%%%%%%%%%%%%%%%%%%%%%%%%%%%%%%%%
Let $E$ be an open set in $\rn$, and for $T>0$ let $E_T$ denote the
cylindrical domain $E\times[0,T]$. Moreover let
\begin{equation*}
S_T=\partial E\times(0,T],\qquad \partial_p E_T=S_T\cup(\bar{E}\times\{0\})
\end{equation*}
denote the lateral, and the parabolic boundary respectively.

We shall consider quasi-linear, parabolic partial differential equations of the form
\begin{equation}  \label{Eq:1:1}
u_t-\dvg\bl{A}(x,t,u, Du) = 0\quad \text{ weakly in }\> E_T,
\end{equation}
%We make the following hypotheses: 
%The function $\bl{A}:E_T\times\rr^{N+1}\to\rn$ is only assumed to be
%measurable and subject to the structure conditions
where the function $\bl{A}(x,t,u,\xi)\colon E_T\times\rr^{N+1}\to\rn$ is assumed to be
measurable with respect to $(x, t) \in E_T$ for all $(u,\xi)\in \rr\times\rn$,
and continuous with respect to $(u,\xi)$ for a.e.~$(x,t)\in E_T$.
Moreover, we assume the structure conditions
\begin{equation}  \label{Eq:1:2}
\left\{
\begin{array}{l}
\bl{A}(x,t,u,\xi)\cdot \xi\ge C_o|\xi|^p \\
|\bl{A}(x,t,u,\xi)|\le C_1|\xi|^{p-1}%
\end{array}%
\right .\quad \text{ a.e.}\> (x,t)\in E_T,\, \forall\,u\in\rr,\,\forall\xi\in\rn
\end{equation}
where $C_o$ and $C_1$ are given positive constants, and we take
\begin{equation}\label{Eq:p-range}
1<p\le\frac{2N}{N+1}\df=p_*.
\end{equation}
The number $p_*$ is referred to as a {\it critical} value
in the local regularity theory for equations \eqref{Eq:1:1}--\eqref{Eq:1:2} (see \cite{DBGV-mono}). 
The range \eqref{Eq:p-range} is often called {\it  singular, sub-critical},
whereas $p_*<p<2$ is {\it singular, super-critical}.

In addition, the 
principal part $\bl{A}$ is assumed to be monotone in the variable $\xi$ 
in the sense
\begin{equation}\label{Eq:5:1:3}
(\bl{A}(x,t,u,\xi_1)-\bl{A}(x,t,u,\xi_2))
\cdot(\xi_1-\xi_2)\ge0
\end{equation}
for all variables in the indicated domains, and Lipschitz continuous 
in the variable $u$, that is,
\begin{equation}\label{Eq:5:1:4}
|\bl{A}(x,t,u_1,\xi)-\bl{A}(x,t,u_2,\xi)|\le \Lm|u_1-u_2| (1+|\xi|^{p-1})
\end{equation}
for some given $\Lm>0$, and for the variables in the indicated domains.

Important examples of partial differential equations satisfying \eqref{Eq:1:1}--\eqref{Eq:5:1:4} include
the parabolic $p$-laplacian
\[
u_t-\dvg\big(|Du|^{p-2}Du\big)=0
\]
and more generally
\[
u_t-\sum_{i,j=1}^N\big(a_{ij}(x,t)|Du|^{p-2}u_{x_i}\big)_{x_j}=0,
\]
where $(a_{ij})$ is a positive definite $N\times N$ matrix with bounded and measurable entries.

Let us consider a boundary datum 
\begin{equation}\label{Eq:bound}
\left\{
\begin{aligned}
&g\in L^p\big(0,T;W^{1,p}( E)\big),\\ 
&g \text{ continuous on}\ \overline{E}_T\ \text{with modulus of continuity }\ \om_g(\cdot).
\end{aligned}
\right.
\end{equation}
We are interested in the boundary behavior of solutions to the Cauchy-Dirichlet problem
\begin{equation}\label{Eq:1:6}
\left\{
\begin{aligned}
&u_t-\dvg\bl{A}(x,t,u, Du) = 0\quad \text{ weakly in }\> E_T\\
&u(\cdot,t)\Big|_{\partial E}=g(\cdot,t)\quad \text{ a.e. }\ t\in(0,T]\\
&u(\cdot,0)=g(x,0),
\end{aligned}
\right.
\end{equation}
with $g$ as in \eqref{Eq:bound}. We do not impose any {\it a priori} requirements on the boundary of the domain
$E\subset\rn$. 

Notice that when $p$ is in the sub-critical range \eqref{Eq:p-range},
the boundedness of a weak solution to \eqref{Eq:1:6} 
is not guaranteed by the mere notion of weak solution. A detailed discussion is given, for example in \cite[Chapter~V]{dibe-sv}, or in \cite[Chapter~6, Section~21.3]{DBGV-mono}.
Therefore, we directly assume the boundedness of solutions.

We refer to the parameters $\datap$ as our structural data, and we write $\gm%
=\gm(p,N,C_o,C_1)$ if $\gm$ can be quantitatively
determined a priori only in terms of the above quantities.
The generic constant $\gm$ may change from line to line.

For $x_o\in \rn$ and $\rho > 0 $, $K_{\rho}(x_o)$ denotes the cube of edge $2\rho$, centered at $x_o$
with faces parallel to the coordinate planes. When $x_o$ is the
origin of $\rn$, we simply write $K_{\rho}$.

Let $\pto\in S_T$, and for $R_o\in(0,1)$ we set the backward, space-time cylinder
\[
Q_{R_o}=K_{R_o}(x_o)\times(t_o-2R_o^p,t_o],
\]
where $R_o$ is so small that $(t_o-2R_o^p,t_o]\subset(0,T]$.
Moreover, we define
\[
\mu_o^+=\essup_{Q_{R_o}\cap E_T}u,\qquad\mu_o^-=\essinf_{Q_{R_o}\cap E_T}u,
\qquad\om_o=\mu_o^+-\mu_o^-=\essosc_{Q_{R_o}\cap E_T}u. 
\]
We will give the formal definition of solution to \eqref{Eq:1:6} in \S~\ref{S:1:2}.
Now we proceed to state the main result of this paper.
\begin{theorem}\label{Thm:1:1}
Let $u$ be a bounded, weak solution to \eqref{Eq:1:6}, and assume that 
\eqref{Eq:1:2}--\eqref{Eq:bound} hold. Then there exist positive constants $\gm,\,c,\,\al$, and $q_o>\frac1{p-1}$, which depend only on the data $\datap$, such that
\begin{equation}\label{Eq:1:9}
\essosc_{Q_{\rho}(\om_o)\cap E_T}u\le
\om_o\exp\left\{-{\gm}\int_{\rho^\al}^1 \left[\dl(s)\right]^{q_o}\frac{ds}{s}\right\}
+2\osc_{\widetilde{Q}_o(\rho)\cap S_T}g,
\end{equation}
where $0<\rho<R_o$, 
\[
Q_\rho(\om_o)=K_\rho(x_o)\times\left(t_o-\tfrac 12c\om_o^{2-p}\rho^p,t_o\right],
\]
\[
\dl(\rho)=\frac{{\rm cap}_p\big(K_{\rho}(x_o)\backslash E,K_{\frac32\rho}(x_o)\big)}{{\rm cap}_p\big(K_{\rho}(x_o),K_{\frac32\rho}(x_o)\big)},
\]
%$c\in(0,1)$ is the infimum between the quantities defined in \eqref{Eq:lower} and in \eqref{Eq:H:1} below,
$\widetilde{Q}_o(\rho)$ is a proper reference cylinder, which shrinks to $(x_o,t_o)$ as $\rho\to0$, and ${\rm cap}_p(D,B)$ denotes the (elliptic) $p$-capacity of $D$ with respect to $B$.
\end{theorem}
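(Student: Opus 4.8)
The strategy follows the classical De Giorgi–Ladyzhenskaya–Uraltseva scheme for oscillation decay, adapted to the singular sub-critical regime and localized at a lateral boundary point. Fix $\pto\in S_T$ and work in the intrinsic cylinder $Q_\rho(\om_o)=K_\rho(x_o)\times(t_o-\tfrac12 c\,\om_o^{2-p}\rho^p,t_o]$, whose time-length is rescaled by $\om_o^{2-p}$ precisely to compensate for the $p$-homogeneity mismatch in the singular case. The plan is to derive, for a suitable sub-cylinder, a quantitative reduction of the oscillation of $u$ controlled by the boundary capacity ratio $\dl(\rho)$, and then to iterate this one-step estimate over a geometric sequence of radii $\rho_j$, turning the multiplicative gains into the exponential of the Wiener-type sum $\int_{\rho^\al}^1[\dl(s)]^{q_o}\,ds/s$; the additive term $2\,\osc_{\widetilde Q_o(\rho)\cap S_T}g$ simply records the oscillation of the Dirichlet datum, which cannot be removed and enters through the comparison of $u$ with $g$ on $S_T$.

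The core one-step argument splits into two alternatives, according to how the measure of the "large" super-level or sub-level sets behaves. First I would establish energy (Caccioppoli) estimates up to the lateral boundary for the truncations $(u-k)_\pm$, using as test function $(u-k)_\pm\zeta^p$ with $\zeta$ a cutoff vanishing on the part of the boundary of $Q_\rho(\om_o)$ away from $S_T$; here the boundary regularity of $g$ and \eqref{Eq:bound} let us choose truncation levels $k$ near $\mu_o^\pm$ so that $(u-k)_\pm$ effectively vanishes on $S_T$ up to $\osc g$. The key geometric input is that the \emph{elliptic} $p$-capacity density $\dl(\rho)$ bounds from below the relative measure of $K_\rho(x_o)\setminus E$ inside $K_{\frac32\rho}(x_o)$; combined with a capacitary (Poincaré-type / De Giorgi isoperimetric) inequality this forces, on each time slice where $u$ is detached from the boundary value, a uniform lower bound on $|\{u\lessgtr k\}\cap K_\rho|$ proportional to a power of $\dl(\rho)$. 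One alternative then runs a De Giorgi iteration in space-time to conclude $u\le \mu_o^+ - \sigma\om_o$ (or $\ge\mu_o^-+\sigma\om_o$) on a smaller cylinder, with $\sigma$ a power of $\dl(\rho)$; the other alternative—when the bad set is large in measure on many time slices—uses a shrinking-of-measure (logarithmic) estimate to propagate smallness in time and again run the iteration. Either way one obtains
\[
\essosc_{Q_{\rho/2}(\om_o)\cap E_T}u\le \big(1-\eta\,[\dl(\rho)]^{q_o}\big)\,\om_o + 2\,\osc_{\widetilde Q_o(\rho)\cap S_T}g,
\]
for structural $\eta\in(0,1)$ and $q_o>\tfrac1{p-1}$, the exponent $q_o$ arising from the powers lost in the capacitary inequality and the measure-shrinking step in the singular regime.

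The iteration is then routine in spirit but delicate in the singular case: one must track that the intrinsic time-scaling $\om_j^{2-p}\rho_j^p$ is \emph{nested}, i.e. $Q_{\rho_{j+1}}(\om_{j+1})\subset Q_{\rho_j}(\om_j)$, which holds because $\om_{j+1}\le\om_j$ and $2-p>0$ makes the cylinders only grow in time as $\om$ decreases—this is exactly where $1<p\le p_*$ versus $p>p_*$ matters, and it is the main structural obstacle, requiring the stopping-time / comparison arguments from \cite{DBGV-mono} and the companion file \texttt{harnack\_mono.tex} to keep everything inside a fixed reference cylinder. Writing $\om_{j+1}\le(1-\eta[\dl(\rho_j)]^{q_o})\om_j + 2\,\osc_{\widetilde Q_o(\rho_j)\cap S_T}g$ and using $1-x\le e^{-x}$, a summation of the homogeneous part over $j$ with $\rho_j=\rho_0\,2^{-j}$ (so that $\sum_j [\dl(\rho_j)]^{q_o}\asymp \int_{\rho}^{1}[\dl(s)]^{q_o}\,ds/s$ up to the comparison $\dl(2s)\asymp\dl(s)$ from the capacity's doubling/monotonicity properties) yields the claimed exponential factor; the inhomogeneous part is summed by monotonicity of $\om_g$ and absorbed into the single term $2\,\osc_{\widetilde Q_o(\rho)\cap S_T}g$. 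The power $\al$ in $\rho^\al$ accounts for the fixed geometric ratio together with the intrinsic waiting-time adjustment between $Q_\rho$ and $Q_\rho(\om_o)$. The hardest technical point, I expect, is establishing the boundary Caccioppoli and measure-shrinking estimates with constants depending only on $\datap$ and not on the geometry of $\partial E$—this is what the elliptic (rather than parabolic) $p$-capacity buys us, and verifying the relevant capacitary inequalities on the slices $K_\rho(x_o)\cap E$ is where the real work lies.
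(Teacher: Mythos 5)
There is a genuine gap at the heart of your one-step estimate, and it is twofold. First, the ``key geometric input'' you invoke is false: the elliptic $p$-capacity density $\dl(\rho)$ does \emph{not} bound from below the relative Lebesgue measure of $K_\rho(x_o)\setminus E$. Sets of Hausdorff dimension larger than $N-p$ (for instance a piece of an $(N-1)$-dimensional hypersurface) have non-degenerate $p$-capacity density but zero Lebesgue measure, so a De Giorgi scheme whose alternatives are driven by the measure of level sets cannot detect the Wiener integral at all. The correct slicewise consequence of the capacitary information is the Maz'ya-type estimate of Lemma~\ref{Lm:5:1}, namely the $L^1$ lower bound $\mu\,[\dl(\rho)]^{\frac1{p-1}}\le\gm\,\bint_{K_{2\rho}(x_o)}v(x,\eta)\,dx$ for the extended super-solution $v=\mu-u_k$ of \eqref{Eq:super}; it gives an integral bound, not a measure-density bound on $\{v>\sigma\mu\}$ with a structural constant.

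Second, and more fundamentally, the passage from that (possibly degenerating, since $\dl(\rho)$ may tend to $0$) integral lower bound to a pointwise oscillation reduction of size a \emph{power} of $\dl(\rho)$ is exactly a weak Harnack-type statement, and in the sub-critical range $1<p\le p_*$ no such inequality holds with constants depending only on $\datap$: this is a structural obstruction, witnessed by the counterexamples in \cite{BIV2010} and \cite[Chapter~6]{DBGV-mono}. The expansion of positivity, which is valid for all $1<p<2$, does not come with a power-type dependence on the measure density, which is precisely why the paper uses it only under uniform $p$-fatness (Lemma~\ref{Lm:8:1}, Corollary~\ref{cor:holder}), where the density is bounded below by a fixed constant $\gm_o$. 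The paper's actual route replaces your alternatives by: solving the auxiliary problem \eqref{Eq:Aux:Prob} with initial datum $v(\cdot,\eta)\chi_{K_{2\rho}(x_o)}$, using the $L^1$ Harnack inequality \eqref{Eq:A:1:2sol} to rule out extinction before time $\bar T$ via \eqref{Eq:lower}, applying the Fornaro--Vespri Harnack-type inequality (Theorem~\ref{FV}) to that auxiliary solution, and transferring the pointwise lower bound back to $v$ by the comparison principle (Lemma~\ref{lem-comp}). This is where the extra structure conditions \eqref{Eq:5:1:3}--\eqref{Eq:5:1:4} are indispensable, and it is the factor $\sig^{d}$ in Theorem~\ref{FV} that produces the non-optimal exponent $q_o=\frac1{p-1}\bigl(1+d\,\frac{p(r-1)}{\lm_r}\bigr)>\frac1{p-1}$ in \eqref{Eq:defin:qo}; in your proposal $q_o$ has no identifiable source. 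Your iteration and the nesting of backward intrinsic cylinders are consistent in spirit with \cite{GLL} and Lemma~\ref{Lm:7:1}, but without the comparison-based Harnack-type step the one-step inequality $\essosc\le(1-\eta[\dl(\rho)]^{q_o})\om_o+2\osc g$ is unproved, and a comparison-free proof of it would in fact be stronger than what the paper (or the known theory) provides.
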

%%%%%%%%%%
\begin{remark}\upshape
In \S~\ref{S:final} we will give a more precise quantification of $q_o$. Its value depends on the Harnack-type inequality we will present in Theorem~\ref{FV}.
\end{remark}
\begin{remark}\upshape
 Although the precise expression of $\widetilde{Q}_o(\rho)$ plays no role, as far as the decay of $u$ at the boundary is concerned,  we present it here for the reader's convenience:
\begin{equation*}%\label{large:cyl}
\widetilde{Q}_o(\rho)\df=
\left\{
\begin{aligned}
&K_{2\tilde r}(x_o)\times[t_o-c\om_o^{2-p}2(2\tilde r)^p, t_o]\\ 
&\text{with}\ \tilde r=[\bar{\om}(\rho^\al)]^{\frac12},
\end{aligned}
\right.
\end{equation*}
where $(0,1)\ni\rho\mapsto\bar{\om}(\rho)$ is defined by
\[
\bar{\om}(\rho)=\exp\left\{-\int_{\rho}^1 \left[\dl(s)\right]^{q_o}\frac{ds}{s}\right\}.
\]
We refer to \cite[\S~6.4]{GLL} for the derivation of $\widetilde{Q}_o(\rho)$.
\end{remark}

A point $(x_o,t_o)\in S_T$ is called a {\it Wiener point} if
$\dsty\int_\tau^1\left[\dl(\rho)\right]^{q_o}\frac{d\rho}\rho\to\infty$
as $\tau\to0$; see Appendix~\ref{S:cap} for more. 
Relying on this definition, {from Theorem~\ref{Thm:1:1} we can conclude
  the following corollary in a standard way.}
\begin{corollary}\label{Cor:1:1}
Let $u$ be a bounded, weak solution to \eqref{Eq:1:6}, assume that \eqref{Eq:1:2}--\eqref{Eq:bound} hold true, and that $(x_o,t_o)\in S_T$ is a Wiener point. Then 
\[
\lim_{\genfrac{}{}{0pt}{}{(x,t)\to(x_o,t_o)}{(x,t)\in E_T}}u(x,t)=g(x_o,t_o).
\]
\end{corollary}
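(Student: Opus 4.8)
The plan is to deduce Corollary~\ref{Cor:1:1} from Theorem~\ref{Thm:1:1} by letting $\rho\to 0$ in the oscillation estimate \eqref{Eq:1:9}, restricted to the shrinking cylinders $Q_\rho(\om_o)$ and $\widetilde Q_o(\rho)$. First I would observe that, since $(x_o,t_o)\in S_T$ is assumed to be a Wiener point, the Wiener-type integral $\int_{\rho^\al}^1[\dl(s)]^{q_o}\,\frac{ds}{s}$ diverges to $+\infty$ as $\rho\to 0$ (the substitution $\tau=\rho^\al$ changes nothing, since $\al>0$ and the integral from $\tau$ to $1$ diverges as $\tau\to 0$). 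Consequently the exponential prefactor $\exp\{-\gm\int_{\rho^\al}^1[\dl(s)]^{q_o}\,\frac{ds}{s}\}$ tends to $0$, so the first term on the right-hand side of \eqref{Eq:1:9} vanishes in the limit. Since $g$ is continuous on $\overline E_T$ with modulus $\om_g$, and $\widetilde Q_o(\rho)$ shrinks to $(x_o,t_o)$ as $\rho\to 0$, the second term $2\osc_{\widetilde Q_o(\rho)\cap S_T}g$ is bounded by $2\,\om_g(\operatorname{diam}\widetilde Q_o(\rho))\to 0$ as well. Hence $\essosc_{Q_\rho(\om_o)\cap E_T}u\to 0$ as $\rho\to 0$.

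Next I would upgrade this vanishing of the essential oscillation over the \emph{intrinsic} cylinders $Q_\rho(\om_o)$ to the pointwise limit $\lim_{(x,t)\to(x_o,t_o),\,(x,t)\in E_T}u(x,t)=g(x_o,t_o)$. The point is that $Q_\rho(\om_o)=K_\rho(x_o)\times(t_o-\tfrac12 c\,\om_o^{2-p}\rho^p,t_o]$ contains, for each $\rho$, a full space-time neighborhood of $(x_o,t_o)$ within $E_T$ (intersected with $E_T$), because $\om_o>0$ is a fixed constant depending on the fixed reference cylinder $Q_{R_o}$, so $c\,\om_o^{2-p}\rho^p>0$ for every $\rho\in(0,R_o)$; if $\om_o=0$ then $u$ is a.e.\ constant on $Q_{R_o}\cap E_T$ and the conclusion is immediate from the boundary condition and continuity of $g$. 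Given $\varepsilon>0$, choose $\rho$ small enough that $\essosc_{Q_\rho(\om_o)\cap E_T}u<\tfrac\varepsilon2$ and, shrinking $\rho$ further if necessary, that $\om_g(\operatorname{diam}(Q_\rho(\om_o)))<\tfrac\varepsilon2$. On $S_T\cap Q_\rho(\om_o)$ the solution equals $g$ (in the trace sense from \eqref{Eq:1:6}), hence a.e.\ on that portion of the lateral boundary $|u-g(x_o,t_o)|\le \om_g(\operatorname{diam}(Q_\rho(\om_o)))<\tfrac\varepsilon2$; combining this with the oscillation bound over $Q_\rho(\om_o)\cap E_T$ yields $|u(x,t)-g(x_o,t_o)|<\varepsilon$ for a.e.\ $(x,t)$ in a neighborhood of $(x_o,t_o)$ inside $E_T$. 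Since $u$ is continuous in the interior of $E_T$ (by the known local regularity theory for \eqref{Eq:1:1}--\eqref{Eq:1:2}), the a.e.\ statement promotes to a genuine limit as $(x,t)\to(x_o,t_o)$ with $(x,t)\in E_T$.

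A few routine points need care. One must make sure the essential oscillation of $u$ over $Q_\rho(\om_o)\cap E_T$ genuinely controls the distance of interior values of $u$ to the boundary value $g(x_o,t_o)$; this requires linking the interior oscillation to the trace of $u$ on $S_T$, which is exactly what \eqref{Eq:1:9} does once we note that $\widetilde Q_o(\rho)\supset Q_\rho(\om_o)$ near $(x_o,t_o)$ (or, more simply, that the boundary oscillation term in \eqref{Eq:1:9} already accounts for the variation of $g$). One should also record that $\widetilde Q_o(\rho)$ and $Q_\rho(\om_o)$ both shrink to the point $(x_o,t_o)$; this is stated in Theorem~\ref{Thm:1:1} and spelled out in the preceding remark via $\tilde r=[\bar\om(\rho^\al)]^{1/2}\to 0$. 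I expect the only genuinely nontrivial ingredient to be the one already granted, namely Theorem~\ref{Thm:1:1} itself; everything in the corollary is the ``standard way'' alluded to in the text, i.e.\ passing to the limit in the Wiener integral and invoking the continuity of $g$ together with interior continuity of $u$. The main (minor) obstacle is bookkeeping: ensuring that the intrinsic scaling $\om_o^{2-p}\rho^p$ does not degenerate and that the neighborhoods produced are honest space-time neighborhoods of $(x_o,t_o)$ relative to $E_T$, so that the pointwise limit — and not merely a limit along the cylinders $Q_\rho(\om_o)$ — follows.
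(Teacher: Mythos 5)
Your reduction of the first term (divergence of the Wiener integral at scale $\rho^\al$, hence vanishing of the exponential factor) and of the second term (shrinking of $\widetilde Q_o(\rho)$, via $\tilde r=[\bar\om(\rho^\al)]^{1/2}\to0$ at a Wiener point, together with the modulus $\om_g$) is exactly the standard passage the paper alludes to, and your anchoring of the interior values to $g(x_o,t_o)$ through the trace condition $(u-g)_\pm(\cdot,t)\in W^{1,p}_o(E)$ plus the oscillation bound, as well as the separate treatment of the degenerate case $\om_o=0$, are fine.

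There is, however, one concrete false step: you assert that $Q_\rho(\om_o)=K_\rho(x_o)\times\bigl(t_o-\tfrac12 c\,\om_o^{2-p}\rho^p,\,t_o\bigr]$ contains a full space-time neighborhood of $(x_o,t_o)$ relative to $E_T$. It does not when $t_o<T$: these are \emph{backward} cylinders and omit every point with $t>t_o$, while the limit in Corollary~\ref{Cor:1:1} is taken over all $(x,t)\in E_T$, including times after $t_o$. As written, your argument only yields the limit along $t\le t_o$. The standard repair is to apply Theorem~\ref{Thm:1:1} also at the shifted vertices $(x_o,t')\in S_T$ with $t'\in(t_o,T]$ close to $t_o$: since $\dl(\rho)$ depends only on the spatial geometry at $x_o$, every such $(x_o,t')$ is again a Wiener point, the exponential factor in \eqref{Eq:1:9} is independent of $t'$, and the boundary term is controlled uniformly by $\om_g$ evaluated on cylinders whose size is bounded independently of $t'$ (using $\om_o\le 2\|u\|_\infty$). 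The union of the backward cylinders at these vertices then covers an honest relative neighborhood of $(x_o,t_o)$ in $E_T$, and your $\varepsilon$-argument goes through verbatim. With this addition (and keeping your trace-anchoring step at each shifted vertex, where $g(x_o,t')$ is within $\om_g$ of $g(x_o,t_o)$), the proof is complete and coincides with the ``standard way'' intended by the paper.
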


Theorem \ref{Thm:1:1} also implies H\"older regularity up to the
boundary under a fairly weak assumption on the domain, i.e. the complement of $E$
is {\it uniformly $p$-fat}. More
precisely, a set $A$ is {\it uniformly $p$-fat}, if for some
$\gm_o,\,\bar\rho>0$ one has
\[
\frac{{\rm cap}_p\big(K_{\rho}(x_o)\cap A,K_{\frac32\rho}(x_o)\big)}{{\rm
    cap}_p\big(K_{\rho}(x_o),K_{\frac32\rho}(x_o)\big)}\geq \gm_o
\]
for all $0<\rho <\bar\rho$ and all $x_o\in A$. See \cite{lewis1988} for
more on this notion. 
However, we point out next that this conclusion can be achieved merely under the
structural condition \eqref{Eq:1:2} of $\bl{A}$, for all $1<p<2$.

\begin{corollary}\label{cor:holder}
  Let $u$ be a bounded, weak solution to \eqref{Eq:1:6}, assume that 
 \eqref{Eq:1:2} holds true for $1<p<2$, 
that the complement of the domain $E$ is
  uniformly $p$-fat, and let $g$ be H\"older continuous.  Then the
  solution $u$ is H\"older continuous up to the lateral boundary.
\end{corollary}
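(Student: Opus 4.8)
The plan is to derive the Hölder estimate as a consequence of Theorem \ref{Thm:1:1}, the uniform $p$-fatness hypothesis, and the Hölder continuity of $g$. First I would observe that uniform $p$-fatness gives $\dl(\rho)\geq\gm_o$ for all $0<\rho<\bar\rho$ and all $x_o$ in the complement of $E$. Plugging this lower bound into the Wiener-type integral in \eqref{Eq:1:9}, and using that $\int_{\rho^\al}^1 \frac{ds}{s}=\al\log\frac1\rho$, the exponential factor becomes
\begin{equation*}
\exp\left\{-\gm\int_{\rho^\al}^1[\dl(s)]^{q_o}\frac{ds}{s}\right\}\leq\exp\left\{-\gm\gm_o^{q_o}\al\log\tfrac1\rho\right\}=\rho^{\beta_1}
\end{equation*}
with $\beta_1=\gm\gm_o^{q_o}\al>0$ depending only on the data and on $\gm_o,\bar\rho$. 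Thus the first term on the right of \eqref{Eq:1:9} decays like a power of $\rho$. For the second term, since $g$ is Hölder continuous with some exponent $\beta_2\in(0,1)$, and $\widetilde Q_o(\rho)$ shrinks to $(x_o,t_o)$ at a rate controlled by $\bar\om(\rho^\al)$, one has $\osc_{\widetilde Q_o(\rho)\cap S_T}g\leq \gm\,\rho^{\beta_3}$ for some $\beta_3>0$: indeed the spatial radius of $\widetilde Q_o(\rho)$ is $2[\bar\om(\rho^\al)]^{1/2}$, and $\bar\om(\rho^\al)\le\rho^{\al\gm_o^{q_o}}$ by the same capacity bound, so the diameter of $\widetilde Q_o(\rho)$ (in the natural parabolic metric, using $\om_o\le\|u\|_\infty+\|g\|_\infty$ bounded) is at most a power of $\rho$; composing with the Hölder modulus of $g$ yields the claim.

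Next I would combine the two bounds: for $0<\rho<\min\{R_o,\bar\rho\}$,
\begin{equation*}
\essosc_{Q_\rho(\om_o)\cap E_T}u\leq \om_o\,\rho^{\beta_1}+2\gm\,\rho^{\beta_3}\leq \gm\,\rho^{\beta},\qquad \beta=\min\{\beta_1,\beta_3\}.
\end{equation*}
The left-hand side is the oscillation over the intrinsic cylinder $Q_\rho(\om_o)=K_\rho(x_o)\times(t_o-\tfrac12c\om_o^{2-p}\rho^p,t_o]$, not over a standard parabolic cylinder. Since $\om_o\le M:=2\|u\|_{\infty,E_T}+\ldots$ is bounded above (and $2-p>0$ because $p<2$, so $\om_o^{2-p}\le M^{2-p}$), a standard parabolic cylinder $K_\rho(x_o)\times(t_o-a\rho^p,t_o]$ with $a$ depending only on $M$ and the data is contained in $Q_{\rho'}(\om_o)$ for $\rho'$ comparable to $\rho$ up to a fixed constant; conversely, if $\om_o$ is very small there is nothing to prove on that scale since the oscillation is already tiny. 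Running this dichotomy as in the standard passage from intrinsic to standard cylinders (cf.\ \cite{DBGV-mono}) upgrades the estimate to the form $\essosc_{K_\rho(x_o)\times(t_o-\rho^p,t_o]\cap E_T}u\le\gm\rho^{\beta'}$ for all sufficiently small $\rho$ and all lateral boundary points $(x_o,t_o)$, with $\beta'$ and $\gm$ depending only on the data, $\gm_o$, $\bar\rho$, and the Hölder data of $g$. Together with the interior Hölder continuity of $u$ (known for all $1<p<2$ under \eqref{Eq:1:2} alone, see \cite{DBGV-mono}), a routine covering/interpolation argument gives joint Hölder continuity up to $S_T$.

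The step I expect to be the main obstacle is the bookkeeping in passing from the intrinsic cylinder $Q_\rho(\om_o)$ to a standard parabolic cylinder uniformly over boundary points, because $\om_o$ both scales the time-length of the cylinder and appears as the prefactor of the decaying term; one must be careful that the constants remain independent of the point $(x_o,t_o)$ and that the iteration closes. A secondary technical point is making precise the rate at which $\widetilde Q_o(\rho)$ shrinks and checking it is genuinely polynomial in $\rho$ — this requires unwinding the definition of $\bar\om$ together with the fatness bound, which is routine but must be done carefully so that $\beta_3>0$. Everything else is a direct substitution into \eqref{Eq:1:9}.
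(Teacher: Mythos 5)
Your route through Theorem~\ref{Thm:1:1} cannot yield Corollary~\ref{cor:holder} as stated, and this is the essential gap. Theorem~\ref{Thm:1:1} is proved only for the sub-critical range \eqref{Eq:p-range} and under the additional structure conditions \eqref{Eq:5:1:3}--\eqref{Eq:5:1:4}, which are exactly what make the comparison principle (Lemma~\ref{lem-comp}) and the Harnack-type inequality (Theorem~\ref{FV}) available. The corollary, however, is claimed for \emph{all} $1<p<2$ under the structure condition \eqref{Eq:1:2} \emph{alone}; the paper stresses this extra generality explicitly. Substituting the fatness bound $\dl(\rho)\ge\gm_o$ into \eqref{Eq:1:9}, as you do, therefore proves boundary H\"older continuity only for $1<p\le p_*$ and only for operators that are monotone in $\xi$ and Lipschitz in $u$; it says nothing about the range $p_*<p<2$ nor about general operators satisfying \eqref{Eq:1:2} only. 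Within that restricted setting your computations (polynomial decay of the exponential from $\dl\ge\gm_o$, the estimate of the oscillation of $g$ over $\widetilde Q_o(\rho)$, and the intrinsic-to-standard cylinder bookkeeping, together with interior H\"older continuity) are essentially sound, but they establish a strictly weaker statement than the one to be proved.

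The paper's proof bypasses Theorem~\ref{Thm:1:1} altogether, and this is the idea you are missing. One first notes that Lemma~\ref{Lm:5:1} holds for all $1<p<2$ and uses only \eqref{Eq:1:2}; combined with uniform $p$-fatness it gives $\mu\,\gm_o^{1/(p-1)}\le\gm\,\bint_{K_{2\rho}(x_o)}v(x,\tau)\,dx$ for all $\tau$ in a suitable backward time interval. Splitting this integral at the level $\sig\mu$ and choosing $\sig$ proportional to $\gm_o^{1/(p-1)}$ converts it into the measure-theoretic bound $|[v(\cdot,\tau)>\sig\mu]\cap K_{2\rho}(x_o)|\ge\sig|K_{2\rho}|$ on that whole time interval. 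Then, instead of any Harnack inequality or comparison argument, one applies the expansion of positivity for non-negative super-solutions (\cite[Chapter~4, Proposition~5.1]{DBGV-mono}), which requires only \eqref{Eq:1:2} and is valid for every $1<p<2$, to obtain $\mu\le\widetilde\gm\,\inf_{K_{2\rho}(x_o)}v(\cdot,\tau)$ on backward cylinders (Lemma~\ref{Lm:8:1}); the induction argument of \cite{GLL} then produces a geometric reduction of oscillation at the boundary and hence the H\"older estimate, with exponent independent of the point $(x_o,t_o)$. To repair your proposal you would have to replace the appeal to Theorem~\ref{Thm:1:1} by such a direct positivity argument, free of \eqref{Eq:5:1:3}--\eqref{Eq:5:1:4} and of the restriction $p\le p_*$.
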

%%%%%%%%%%%%%%%%%
\subsection{Method of the Proof}
A boundary estimate of the kind \eqref{Eq:1:9} was established for the elliptic $p$-laplacian by Maz'ya \cite{mazya70}. 
The method employed was a potential theoretical one
and heavily depended on the comparison principle. Such an estimate of Maz'ya
was later established for very general elliptic operators with structures similar to \eqref{Eq:1:2}
by Gariepy and Ziemer \cite{Gar-Zie}.  Our approach mainly follows their ideas whose
adaption to the parabolic $p$-laplacian setting presents considerable difficulty, cf.~\cite{GLL, GL}.
For this reason, we recapitulate the main steps in the following.
For simplicity, let us suppose  $u$ is a solution to the elliptic $p$-laplacian in an open set $E\subset \rn$
and it attains zero on part of the boundary $O\subset \pl E$.

The first step consists in showing the truncated function $(u-k)_+$ with $k>0$ is a local sub-solution
in a cube $K_{2\rho}(x_o)$ with the center $x_o\in O$, after a zero extension outside $E$. 
Then we work with the non-negative, local super-solution $v$ in $K_{2\rho}(x_o)$ defined by
\begin{equation}\label{Eq:sketch:0}
 v\df=\mu - (u-k)_+,\quad\text{ where }\mu=\sup_{K_{2\rho}(x_o)}(u-k)_+.
\end{equation}
Here and in the sequel, we use ``$\sup$/$\inf$" instead of ``$\essup$/$\essinf$" for simplicity.

Next, we derive an energy estimate for $v$ of the following type:
\begin{equation}\label{Eq:sketch:1}
\int_{K_{\frac32\rho}(x_o)}|D(v\z)|^p\,dx\le\gm \mu\rho^{N-p}\left(\bint_{K_{\frac32\rho}(x_o)}{v}\,dx\right)^{p-1},
\end{equation}
where $\z\in C^1_o(K_{\frac32\rho}(x_o))$ is a cutoff function that equals $1$ in $K_{\rho}(x_o)$.
Notice that $v\z\in W^{1,p}_o(K_{\frac32\rho}(x_o))$ and $v\z=\mu$ in  $K_{\rho}(x_o)\setminus E$.
Hence by the definition of $p$-capacity in Appendix~\ref{S:cap} we may estimate
\begin{equation}\label{Eq:sketch:2}
\int_{K_{\frac32\rho}(x_o)}|D(v\z)|^p\,dx\ge\mu^p{\rm cap}_p( K_{\rho}(x_o)\setminus E, K_{\frac32\rho}(x_o)).
\end{equation}
Combining \eqref{Eq:sketch:1} and  \eqref{Eq:sketch:2} 
and noting ${\rm cap}_p( K_{\rho}(x_o), K_{\frac32\rho}(x_o))=c\rho^{N-p}$ for some positive $c=c(p,N)$,
 we easily obtain
\begin{equation}\label{Eq:sketch:3}
\mu[\dl(\rho)]^{\frac1{p-1}}\le\gm\bint_{K_{\rho}(x_o)}{v}\,dx.
\end{equation}

Finally, the reduction of oscillation is realized via an application of the weak Harnack inequality
for the non-negative super-solution $v$ in $K_{2\rho}(x_o)$. As a result we arrive at
\begin{equation*}%\label{Eq:sketch:4}
\mu[\dl(\rho)]^{\frac1{p-1}}\le\gm\bint_{K_{\rho}(x_o)}{v}\,dx\le\widetilde{\gm}\inf_{K_{\rho}(x_o)} v,
\end{equation*}
which implies
\[
\sup_{K_{\rho}(x_o)}(u-k)_+\le\mu\left(1-\tfrac1{\widetilde{\gm}} [\dl(\rho)]^{\frac1{p-1}}\right).
\]

Coming back to our parabolic setting, the analog of \eqref{Eq:sketch:0} is defined in \eqref{Eq:super},
which will be proven to be a super-solution across the lateral boundary in Lemma~\ref{Lem:2:0}.
The analog of \eqref{Eq:sketch:3} has been proven in \cite{GLL} and will be recalled in Lemma~\ref{Lm:5:1}.
Due to the lack of a proper weak Harnack inequality in  the singular, sub-critical range of $p$,
we will instead employ a Harnack-type inequality presented in Theorem~\ref{FV}.
This is the stage where we need the comparison principle.
%%%%%%%%%%%%%%%%%
\subsection{Novelty and Significance}
This is the third paper in a wider project devoted to the study of boundary behavior of solutions to \eqref{Eq:1:6}: in \cite{GLL} we dealt with the singular super-critical range $p\in\left(p_*,2\right)$, in \cite{GL} we studied the degenerate case $p>2$, and here we consider the singular sub-critical interval given by $p\in(1,p_*]$, which to our knowledge has never been dealt with before. Some remarks about the significance of our results have already been given in \cite{GLL,GL}, and we will not repeat them here: what we are going to concentrate on, are the differences that the sub-critical range shows with respect to the super-critical one.

In all these three papers, the most interesting result is that a Wiener point is a continuity point for the solution at the boundary, and a quantitative characterization of the decay in a neighborhood of that point at the boundary is given (see \eqref{Eq:1:9} in this case). However, there is a fundamental difference: for $p>p_*$ (see \cite{GLL,GL}), a point $(x_o,t_o)\in S_T$ is a Wiener point if $\dsty\int_0^1[\dl(\rho)]^{\frac1{p-1}}\,\frac{d\rho}{\rho}=\infty$, whereas here we require $\dsty\int_0^1[\dl(\rho)]^{q_o}\,\frac{d\rho}{\rho}=\infty$, with $q_o>\frac1{p-1}$. Hence, our result is not optimal, and our quantitative estimates cannot recover what is already known qualitatively, at least in the prototype case.

Indeed, the fact that a Wiener point with $q_o=\frac1{p-1}$ is a continuity point has already been
observed in \cite[Proposition~5.4]{BBGP} for the prototype parabolic $p$-laplacian
\begin{equation}\label{p-lapl}
u_t-\dvg(|Du|^{p-2}Du)=0, 
\end{equation}
for
{\it any} $p>1$, hence both singular, i.e. with $1<p<2$, and degenerate,
i.e. with $p>2$ (see also \cite{KiLi96}). %\textcolor{red}{
In \cite{BBGP} the characterization is provided in terms of a family of barriers, adapting to the parabolic $p$-laplacian the well-known Perron method. This approach is quite flexible, as it allows us to give some simple geometric conditions which ensure the regularity of boundary points. Here by {\it regular}, we mean a boundary point where all solutions to the Dirichlet problem attain their continuous boundary values continuously. Non-cylindrical domains can then be considered, and indeed Proposition~5.4, mentioned above, deals with quite general sets, of which cylinders as the ones considered here are just a particular instance. On the other hand, Perron's method, at least as developed in \cite{BBGP}, does not give any quantitative modulus of continuity. Moreover, at the moment it is not known, whether the qualitative, geometric characterization of regular points is true for more general operators $\bl A$, like the ones we study here.%} 

The lack of optimality of the exponent $q_o$ notwithstanding, here the novelty with respect to the existing literature is twofold: first of all, we deal with quite general operators, and the only restriction for Theorem~\ref{Thm:1:1} and Corollary~\ref{Cor:1:1} lies in the requirement that the Comparison Principle is satisfied, whereas for Corollary~\ref{cor:holder} the structural condition \eqref{Eq:1:2} suffices; on the other hand, as we have already mentioned above, to our knowledge a quantitative characterization, in the singular sub-critical range, of the boundary behavior of solutions to \eqref{Eq:1:6} for rough sets  has never been provided. 

There is yet another new observation regarding what we had in \cite{GLL}:
 we stated Theorem~1.1 of \cite{GLL} in {\it centered} cylinders.
Here we seize this opportunity to point out that in fact {\it backward} cylinders will also work in \cite{GLL},
just like in Theorem~\ref{Thm:1:1} here. The modification can be modeled on Lemma~\ref{Lm:7:1} below.
This matches the observation made in   
 \cite[Theorem~3.1]{BBGP} for the prototype equation \eqref{p-lapl}, 
 that what happens in the future, namely for $t>t_o$, does not affect
 the regularity of the boundary point $(x_o,t_o)$ for the parabolic $p$-laplacian \eqref{p-lapl}. 
 %It is quite natural to expect an analogous situation, at least for $\bl{A}=\bl{A}(x,Du)$, but we were not able to show it, either in \cite{GLL} or here. 
 
%As a matter of fact, this is not the only similarity between these notes and \cite{GLL}: indeed, 
Although we have basically followed the same approach as in \cite{GLL}, the fundamental difference lies in the Harnack inequalities we have to employ. When $p\in\left(p_*,2\right)$, we can rely upon an intrinsic weak Harnack inequality, and this allows us to conclude; when $p\in\left(1,p_*\right]$, only Harnack-type estimates are at disposal (see Theorem~\ref{FV} below), and this is ultimately the reason for the lack of optimality of $q_o$. It is important to recall that the lack of a proper Harnack inequality when $p\in\left(1,p_*\right]$, is a structural fact, as suitable counterexamples in \cite{BIV2010} and \cite[Chapter~6]{DBGV-mono} show. On the other hand, we think that the non-optimality of $q_o$ given here is just a technical fact, and a different approach might provide $q_o=\frac1{p-1}$, in such a way yielding a unified description for solutions to \eqref{Eq:1:1}--\eqref{Eq:1:2}, in the full singular range $1<p<2$. Unfortunately, we could not come up with such an approach at the moment.

Finally, few words about the contents of the paper. The proof of Theorem~\ref{Thm:1:1} is given in \S~\ref{S:final}
and of Corollary~\ref{cor:holder} in \S~\ref{S:7}; in both sections we concentrate on the actual novelties, and we refer to the analogous proof in \cite{GLL}, whenever it would be a straightforward repetition of arguments already displayed elsewhere.

We devote \S~\ref{S:Prelim} to introductory materials, in particular an $L^1$ Harnack inequality; the remaining sections concern the discussion of an auxiliary problem (\S~\ref{S:Ext}), the presentation of the Harnack-type inequality which is known in the singular sub-critical range (\S~\ref{S:H-sub}), and a lower bound for a proper super-solution, proven in \cite{GLL} (\S~\ref{S:low-b}). An Appendix collects definitions, results, and some examples about the notion of capacity.
\vskip.2truecm
%%%%%%%%%%%%%%%%%%%%%%%
\begin{ack} 
{\normalfont U. Gianazza is grateful to the TIFR--CAM of Bangalore, India, where part of this work was written.
U. Gianazza was supported by the grant 2017TEXA3H\_002 ``Gradient flows, Optimal Transport and Metric Measure Structures". N. Liao was supported by the FWF--Project P31956--N32 ``Doubly nonlinear evolution equations". Both authors are grateful to the anonymous referees for their comments, which greatly helped improve the quality of the paper.}
\end{ack}
%%%%%%%%%%%%%%%%%%%%%%%%%%%%%%%%%%%%%%%%%%%%%%%%
%%%%%%%%%%%%%%%%%
\section{Preliminaries}\label{S:Prelim}
\subsection{The Definition of Solutions}\label{S:1:2}
A function
\begin{equation*}%  \label{Eq:1:4}
u\in C\big(0,T;L^2_{\loc}(E)\big)\cap L^p_{\loc}\big(0,T; W^{1,p}_{%
\loc}(E)\big)
\end{equation*}
is a local, weak sub(super)-solution to \eqref{Eq:1:1}--\eqref{Eq:1:2} if
for every compact set $K\subset E$ and every sub-interval $[t_1,t_2]\subset
(0,T]$
\begin{equation*}  %\label{Eq:1:5}
\int_K u\vp\,dx\bigg|_{t_1}^{t_2}+\int_{t_1}^{t_2}\int_K \big[-u\vp_t+\bl{A}%
(x,t,u,Du)\cdot D\vp\big]dxdt\le(\ge)0
\end{equation*}
for all non-negative test functions
\begin{equation*}
\vp\in W^{1,2}_{\loc}\big(0,T;L^2(K)\big)\cap L^p_{\loc}\big(0,T;W_o^{1,p}(K)%
\big).
\end{equation*}
This guarantees that all the above integrals %in \eqref{Eq:1:5} 
are convergent. A function $u$, which is both a local, weak sub-solution and
a local, weak super-solution, is a local, weak solution. 

For any $k\in\rr$, let
\[
(v-k)_-=\max\{-(v-k),0\},\qquad(v-k)_+=\max\{v-k,0\}.
\]
Accordingly, we notice that
\[
k-(u-k)_-=\min\{u,k\},\qquad k+(u-k)_+=\max\{u,k\}.
\]
Using \eqref{Eq:1:2}$_1$ and employing a similar method as in
({\bf A}$_6$) of \cite[Chapter II]{dibe-sv} or Lemma~1.1 of \cite[Chapter 3]{DBGV-mono},
we can show the equation \eqref{Eq:1:1} with \eqref{Eq:1:2}
is {\it parabolic}, in the sense that 
\begin{equation*}%\label{Eq:para}
	\left\{
	\begin{aligned}
		&\mbox{whenever $u$ is a local weak sub(super)-solution,}\\ 
		&\mbox{the function $k\pm(u-k)_\pm$ is a local weak sub(super)-solution, for all $\ k\in\rr$.}
		%&\mbox{$\bl{A}(x,t,u,Du)$ replaced by $\pm\bl{A}\big(x,t,k\pm(u-k)_\pm,\pm D(u-k)_\pm\big)$ and }\\
		%&\mbox{$|u|^{p-2}u$ replaced by $\pm\big|k\pm(u-k)_\pm\big|^{p-2}\big(k\pm(u-k)_\pm\big)$. }
	\end{aligned}
	\right.
\end{equation*}
%We require \eqref{Eq:1:1}--\eqref{Eq:1:2} to be {\it parabolic}: we refer to \cite{GLL}, \cite[Chapter II]{dibe-sv} or Lemma~1.1 of \cite[Chapter 3]{DBGV-mono} for more details about the notion of parabolicity, and the simple requirement, which ensures that such a notion is satisfied in our case.

%For $a,\,\theta_1,\,\theta_2 > 0$ and $(y,s)\in E_T$, we will consider 
%\begin{align*}
%\left\{
%\begin{array}{ll}
%\text{ forward  cylinders: }\ \  K_{a\rho} (y) \times (s,s+\theta_2 \rho^p],\\[3pt]
%\text { backward cylinders: }\ \  K_{a\rho} (y) \times (s-\theta_1 \rho^p,s],\\[3pt]
%\text{ centered cylinders: }\ \  K_{a\rho} (y) \times (s-\theta_1 \rho^p,s+\theta_2\rho^p].
%\end{array}
%\right.
%\end{align*}

A weak sub(super)-solution to
the Cauchy-Dirichlet problem \eqref{Eq:1:6} is a measurable function 
$$u\in C\big(0,T;L^2(E)\big)\cap 
L^p\big(0,T; W^{1,p}(E)\big)$$ satisfying
\begin{equation*}% \label{Eq:1:7}
\begin{aligned}
&\int_E u\vp(x,t)\,dx+\iint_{E_T} \big[-u\vp_t+\bl{A}%
(x,t,u,Du)\cdot D\vp\big]dxdt
\le(\ge)\int_E g\vp(x,0) \,dx
\end{aligned}
\end{equation*}
for all non-negative test functions
\begin{equation*}
\vp\in W^{1,2}\big(0,T;L^2(E)\big)\cap L^p\big(0,T;W_o^{1,p}(E)\big).
\end{equation*}
In addition, we take the boundary condition $u\le g$ ($u\ge g$) to
mean that $(u-g)_+(\cdot,t)\in W^{1,p}_o(E)$ ($(u-g)_-(\cdot,t)\in W^{1,p}_o(E)$) for
a.e. $t\in(0,T]$. A function $u$, which is both a weak sub-solution and
a weak super-solution, is a weak solution. 

We have given the definition in a global way, but all the
following arguments and results will have a local thrust: indeed, what we
are interested in, is whether solutions $u$ to
\eqref{Eq:1:6} continuously assume the given boundary
data at a single point or on some distinguished part of the lateral
boundary $S_T$ of a cylinder, but not necessarily on the whole $S_T$.
In this context,  the initial datum does not play a role.

In the sequel we will need the following comparison principle for weak (sub/super)-solutions
(see \cite[Lemma~3.1]{KiLi96}, \cite[Lemma~3.5]{KoKuPa10} and \cite[Section~2]{BBGP}). 
%\color{red}
We let $\Om$ be an open set in $\rn$, and $\Om_{S}:=\Om\times[0,S]$.
Note carefully that there is no connection between $\Om$ and the set $E$.
%%%%%%%%%%%%
\begin{lemma}[Weak Comparison Principle]\label{lem-comp}
Suppose that $v$ is a weak super-solution and $u$ is 
a weak sub-solution to \eqref{Eq:1:6} 
in $\Om_{S}$ under conditions \eqref{Eq:1:2}--\eqref{Eq:bound},
and with boundary values $g$ and $h$ respectively. If $v$ and $-u$ are lower semicontinuous on $\overline{\Om_{S}}$ and $g\ge h$ on the parabolic boundary $\partial_p \Om_{S}$, then $v\ge u$ a.e.\ in $\Om_{S}$.
\end{lemma}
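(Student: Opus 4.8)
The plan is to run the classical energy argument for parabolic comparison, adapted to the structure \eqref{Eq:1:2}, \eqref{Eq:5:1:3}, \eqref{Eq:5:1:4}. Write $w:=(u-v)_+$; the goal is to prove $w=0$ a.e.\ in $\Om_S$, which is precisely $v\ge u$ a.e. The hypotheses that $v$ and $-u$ are lower semicontinuous on $\overline{\Om_S}$ and that $g\ge h$ on $\partial_p\Om_S$ translate, through the $W^{1,p}_o$-interpretation of the boundary data, into the two facts I will use: $w(\cdot,t)\in W^{1,p}_o(\Om)$ for a.e.\ $t\in(0,S]$, and $w(\cdot,0)=0$. Since $u$ and $v$ are bounded, $0\le w\le M:=2(\|u\|_\infty+\|v\|_\infty)$; in particular, because $1<p<2$, $w^{p/(p-1)}\le M^{(2-p)/(p-1)}w^2$, a reduction I will invoke when estimating the lower-order term.

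Next I would derive the basic energy inequality. Because $u,v$ lie only in $C(0,S;L^2(\Om))\cap L^p(0,S;W^{1,p}(\Om))$ and possess no $L^2$-time derivative, $w$ cannot be used as a test function directly; one first regularizes in time (Steklov averages, or a mollification in time), inserts the regularized truncation of $w$ into the weak inequality for the sub-solution $u$ and into that for the super-solution $v$, subtracts the two, and lets the regularization parameter tend to $0$. This is the technical core of the lemma, and is exactly where \cite[Lemma~3.1]{KiLi96}, \cite[Lemma~3.5]{KoKuPa10} and \cite[Section~2]{BBGP} do their work; the semicontinuity and boundedness hypotheses are what permit the passage to the limit and the vanishing of the initial and lateral contributions (the latter because $w(\cdot,t)\in W^{1,p}_o(\Om)$). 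The outcome is, for a.e.\ $t\in(0,S]$,
\[
\frac12\int_\Om w^2(x,t)\,dx+\iint_{\Om\times(0,t)}\big(\bl A(x,s,u,Du)-\bl A(x,s,v,Dv)\big)\cdot Dw\,dx\,ds\le0,
\]
where $Dw=(Du-Dv)\chi_{\{u>v\}}$ and the $s=0$ term dropped out since $w(\cdot,0)=0$.

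Finally I would close the estimate. Split $\bl A(x,s,u,Du)-\bl A(x,s,v,Dv)=\big[\bl A(x,s,u,Du)-\bl A(x,s,u,Dv)\big]+\big[\bl A(x,s,u,Dv)-\bl A(x,s,v,Dv)\big]$. By the monotonicity \eqref{Eq:5:1:3}, the first bracket dotted with $Dw$ is non-negative on $\{u>v\}$ and may be retained on the favorable side (for the prototype $p$-laplacian it is in fact coercive in $|Dw|$, and then one concludes at once, as in the quoted references); by the Lipschitz condition \eqref{Eq:5:1:4}, the second bracket is bounded in modulus by $\Lm w\,(1+|Dv|^{p-1})$, so its contribution is at most $\Lm\iint_{\Om\times(0,t)}w\,(1+|Dv|^{p-1})\,|Dw|\,dx\,ds$. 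Applying Young's inequality to $w(1+|Dv|^{p-1})|Dw|$, absorbing the resulting $|Dw|^p$-term into the monotone part, and using $w\le M$ together with $w^{p/(p-1)}\le C(M)\,w^2$, the energy inequality reduces to a Gronwall-type differential inequality for $t\mapsto\int_\Om w^2(x,t)\,dx$; since its right-hand side vanishes at $t=0$ (because $w(\cdot,0)=0$ and $|Du|^p,|Dv|^p\in L^1$), Gronwall's lemma forces $\int_\Om w^2(x,t)\,dx=0$ for every $t\in[0,S]$, i.e.\ $w\equiv0$ and $v\ge u$ a.e.\ in $\Om_S$. The step I expect to be the main obstacle is the time-regularization: with no time derivative available for $u$ or $v$, legitimizing $(u-v)_+$ as a test function and correctly identifying the limit of the regularized evolution terms is the delicate point, and it is there that the lower semicontinuity of $v$ and $-u$ and the precise Sobolev meaning of ``$g\ge h$ on $\partial_p\Om_S$'' are genuinely used.
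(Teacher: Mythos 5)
The paper does not actually prove Lemma~\ref{lem-comp}: it is imported wholesale from \cite{KiLi96}, \cite{KoKuPa10} and \cite{BBGP}, so there is no in-paper argument to match. Judged on its own terms, your sketch has a genuine gap at its decisive step. After splitting $\bl{A}(x,s,u,Du)-\bl{A}(x,s,v,Dv)$ you propose to ``absorb the resulting $|Dw|^p$-term into the monotone part''. But \eqref{Eq:5:1:3} only asserts $(\bl{A}(x,s,u,Du)-\bl{A}(x,s,u,Dv))\cdot Dw\ge 0$ on $\{u>v\}$: this is sign information, not a coercive lower bound of the form $c\iint|Dw|^p$ (the coercivity in \eqref{Eq:1:2} concerns $\bl{A}(x,t,u,\xi)\cdot\xi$, not differences of $\bl{A}$ at two gradients). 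So there is nothing into which the $\varepsilon\iint|Dw|^p$ from Young's inequality can be absorbed, and the Gronwall inequality never materializes under the stated structure conditions. Even in the prototype case, which you invoke parenthetically, the monotonicity of $\xi\mapsto|\xi|^{p-2}\xi$ for $1<p<2$ is degenerate: it controls $|Dw|^2(|Du|+|Dv|)^{p-2}$, not $|Dw|^p$, so the exponent pair $(p,\frac{p}{p-1})$ is not the right one; after the correct weighted Young inequality the lower-order term is $w^2$ times a weight comparable to $1+|Dv|^p$, which is merely in $L^1(\Om_S)$ and does not directly give a Gronwall inequality for $t\mapsto\int_\Om w^2(\cdot,t)\,dx$.

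This is exactly why the $u$-dependence of $\bl{A}$ is the crux here. The works the paper cites prove the comparison principle for $\bl{A}=\bl{A}(x,t,\xi)$ (or for the prototype), where the troublesome bracket $\bl{A}(x,s,u,Dv)-\bl{A}(x,s,v,Dv)$ is absent and plain monotonicity together with the Steklov/time-mollification argument (the part you correctly identify as technical, and which those references carry out, using the semicontinuity and the Sobolev meaning of $g\ge h$) suffices. To accommodate \eqref{Eq:5:1:4} one needs an additional mechanism that your sketch does not supply: either a quantitative strict monotonicity hypothesis in place of \eqref{Eq:5:1:3}, or a different class of test functions (for instance of the type $\frac{(u-v)_+}{(u-v)_++\varepsilon}$ as in the proof of Lemma~\ref{Lem:2:0}, or exponential/variable-doubling devices) designed so that the $u$-dependent term can be controlled without coercivity of the difference. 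As written, the energy-plus-Gronwall route closes only for $u$-independent or uniformly strictly monotone $\bl{A}$, not under \eqref{Eq:1:2}, \eqref{Eq:5:1:3}, \eqref{Eq:5:1:4} alone.
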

\begin{remark}\label{rmk-comp}
{\normalfont As remarked in \cite{KiLi96}, the proof of the Weak Comparison Principle shows that a super-solution $v$ is greater than a sub-solution $u$ in $\Om_{S}$ if $u(x,0) \le v(x,0)$ and for each $t\in(0,S)$ the function $x\mapsto(u(x,t)-v(x,t))_+$ is in the space $W^{1,p}_o(\Om)$. See also \cite[Chapter~7]{DBGV-mono}.}
\end{remark}
%%%%%%%%%%%%
\color{black}
\subsection{Particular Super-solutions Across $S_T$}\label{S:A:1}
Fix $(x_o,t_o)\in S_T$, consider the cylinder 
\begin{equation*}%\label{Eq:cyl1}
Q=K_{32\rho}(x_o)\times[s,t_o],
\end{equation*} 
where $s$ and $t_o$ are such that $0<s<t_o\le T$, and let $\Sigma\df=S_T\cap Q$.

We extend $\bl A$ to $\widetilde{\bl A}$ defined in $Q\times\rr\times\rn$, setting
\begin{equation*}
\widetilde{\bl A}(x,t,u,\xi)=
\begin{cases}
{\bl A}(x,t,u,\xi)&\quad\text{ for a.e. }\ (x,t)\in Q\cap E_T,\,\forall\,u\in\rr,\,\forall\,\xi\in\rn,\\
|\xi|^{p-2}\xi&\quad\text{ for a.e. }\ (x,t)\in Q\backslash E_T,\,\forall\,u\in\rr,\,\forall\,\xi\in\rn.
\end{cases}
\end{equation*}
 It is apparent that 
$\widetilde{\bl A}$ satisfies conditions \eqref{Eq:1:2} and \eqref{Eq:5:1:3}--\eqref{Eq:5:1:4} in $Q\times\rr\times\rn$,
with $C_o$ and $C_1$ replaced by $\min\{1,\,C_o\}$ and $\max\{1,\,C_1\}$ respectively.

%\color{black}
We have the
following simple lemma, which has been stated in \cite[Lemma~2.1]{GLL}.
However, we present a complete proof here.

\begin{lemma}\label{Lem:2:0}
 Take any number $k$ such that
  $k\ge\sup_\Sigma g$. Let $u$ be a weak solution to the problem
  \eqref{Eq:1:6}, and define 
  \begin{displaymath}
    u_k=
    \begin{cases}
      (u-k)_+, &\text{in }Q\cap E_T, \\
      0, & \text{in }Q\setminus E_T.
    \end{cases}
  \end{displaymath}  
  Then $u_k$ is a weak sub-solution to
\eqref{Eq:1:1} in the cylinder $Q$ with $\bl A$ replaced by $\widetilde{\bl A}(x,t,k+u_k,Du_k)$. The
  same conclusion holds for the zero extension of $u_h=(h-u)_+$ for
  truncation levels $h\leq \inf_{\Sigma}g$ with $\bl A$ replaced by $-\widetilde{\bl A}(x,t,h-u_h,-Du_h)$.
\end{lemma}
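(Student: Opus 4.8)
The plan is to verify the weak sub-solution inequality for $u_k$ directly from the definition, treating the regions $Q\cap E_T$ and $Q\setminus E_T$ separately and gluing them along $\Sigma$.

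\medskip\noindent\textbf{Setup.}
First I would fix a nonnegative test function $\vp\in W^{1,2}_{\loc}(s,t_o;L^2(K))\cap L^p_{\loc}(s,t_o;W^{1,p}_o(K))$ for a compact $K\subset K_{32\rho}(x_o)$, and a subinterval $[t_1,t_2]\subset(s,t_o]$. I must show
\begin{equation*}
\int_K u_k\vp\,dx\Big|_{t_1}^{t_2}+\int_{t_1}^{t_2}\!\!\int_K\big[-u_k\vp_t+\widetilde{\bl A}(x,t,k+u_k,Du_k)\cdot D\vp\big]\,dx\,dt\le0.
\end{equation*}
In $Q\setminus E_T$ we have $u_k\equiv0$ and $Du_k\equiv0$, so the integrand there reduces to $0$; thus the integral over $K$ splits and only $K\cap E$ contributes. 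The point is then to recover this from the fact that $u$ is a weak solution (hence sub-solution) in $E_T$ and that $k\pm(u-k)_\pm$ is a sub-solution by the parabolicity property recalled just above.

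\medskip\noindent\textbf{The key step: choosing the right test function.}
Because $k\ge\sup_\Sigma g$, the boundary condition gives $(u-g)_+(\cdot,t)\in W^{1,p}_o(E)$, hence $(u-k)_+(\cdot,t)\le (u-g)_+(\cdot,t)$ in a neighborhood of $\partial E$ inside $K_{32\rho}(x_o)$, and in particular $u_k=(u-k)_+$ vanishes near $S_T\cap Q$. This is what makes the zero extension admissible: $u_k(\cdot,t)\in W^{1,p}(K_{32\rho}(x_o)\cap E)$ extends by zero to an element of $W^{1,p}(K_{32\rho}(x_o))$ with $Du_k$ the zero-extension of $D(u-k)_+$. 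Given a test function $\vp\ge0$ on $Q$ supported away from $t=s$, its restriction to $E_T$ is a legitimate test function for the sub-solution $k+(u-k)_+$, because $\vp$ need not vanish on $S_T$ but $u_k$ does — more precisely I would use that $(u-k)_+$ is a sub-solution and test the inequality for $\max\{u,k\}-k$ with $\vp$ localized to $E_T$, using a cutoff in $x$ that is $1$ on the support of $\vp$ and supported in $E$ near points where $(u-k)_+>0$. Then in the region $Q\cap E_T$ the structure function appearing is $\bl A(x,t,u,Du)=\bl A(x,t,\max\{u,k\},D\max\{u,k\})$ on $\{u>k\}$ and the gradient term vanishes on $\{u\le k\}$; since $\max\{u,k\}=k+u_k$ and $D\max\{u,k\}=Du_k$ on $Q\cap E_T$, this is exactly $\widetilde{\bl A}(x,t,k+u_k,Du_k)$ there. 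On $Q\setminus E_T$ the integrand vanishes identically, so adding that region changes nothing and we obtain the claimed inequality over all of $Q$.

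\medskip\noindent\textbf{The second assertion and the main obstacle.}
For $u_h=(h-u)_+$ with $h\le\inf_\Sigma g$, the argument is symmetric: $k-(u-k)_-=\min\{u,k\}$ is a super-solution, so $-(h-\min\{u,h\})=\min\{u,h\}-h$ handled as a sub-solution after sign reversal gives that the zero extension of $(h-u)_+$ is a sub-solution with $\bl A$ replaced by $-\widetilde{\bl A}(x,t,h-u_h,-Du_h)$; one checks the structure conditions are preserved under $\xi\mapsto-\xi$, $u\mapsto -u$. I expect the only genuinely delicate point to be the \emph{gluing across $\Sigma$}: one must be careful that no boundary term on $S_T$ is created when passing from the weak formulation in $E_T$ (where test functions lie in $W^{1,p}_o$ of compact subsets of $E$) to the weak formulation in $Q$ (where test functions need only vanish on $\partial K_{32\rho}(x_o)$, not on $S_T$). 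This is precisely resolved by the fact that $u_k$ itself vanishes identically in a neighborhood of $\Sigma$ inside $Q$, so any test function may be multiplied by a spatial cutoff supported in $E$ without affecting the integrals involving $u_k$ or $Du_k$; the continuity-in-time of $u$ in $L^2$ then takes care of the endpoint terms. Once this localization is in place the rest is a routine verification, essentially reproducing the computation of \cite[Lemma~2.1]{GLL}.
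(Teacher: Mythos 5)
There is a genuine gap, and it sits exactly at the point you yourself flag as delicate. Your gluing argument rests on the claim that, since $k\ge\sup_\Sigma g$ and $(u-g)_+(\cdot,t)\in W^{1,p}_o(E)$, the function $u_k=(u-k)_+$ ``vanishes near $S_T\cap Q$'', i.e.\ identically in a neighborhood of $\Sigma$ inside $Q$. That is false in general: membership in $W^{1,p}_o(E)$ (equivalently, the comparison $0\le(u-k)_+\le(u-g)_+$) only gives that $(u-k)_+(\cdot,t)$ has \emph{zero trace} on $K_{32\rho}(x_o)\cap\partial E$, not that it vanishes on an open neighborhood of the lateral boundary. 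Zero trace is enough to justify that the zero extension of $u_k$ lies in $W^{1,p}(K_{32\rho}(x_o))$ with the extended gradient, but it is not enough for your key reduction, namely that an arbitrary nonnegative test function $\vp$ on $Q$ ``may be multiplied by a spatial cutoff supported in $E$ without affecting the integrals involving $u_k$ or $Du_k$.'' If $\eta$ is such a cutoff, then $D(\eta\vp)=\eta D\vp+\vp D\eta$, and the term $\vp D\eta$ lives precisely in the boundary layer where $u_k$ and $\bl{A}(x,t,u,Du)$ need not vanish; letting $\eta\to1$ one must control a boundary-layer integral of order $\int|\bl{A}||D\eta|\vp$, which is exactly the difficulty the lemma is about and cannot be waved away. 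So the passage from test functions in $W^{1,p}_o$ of compact subsets of $E$ to test functions that are free on $S_T\cap Q$ is not justified in your write-up.

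The paper's proof avoids this by not using a cutoff supported in $E$ at all: for an arbitrary cutoff $\z$ vanishing only on $\partial K_{32\rho}(x_o)$, it tests the Steklov-averaged equation with
\[
\vp=\frac{(\llbracket u\rrbracket_{\ell}-k)_+}{(\llbracket u\rrbracket_{\ell}-k)_++\varep}\,\z ,
\]
which is admissible in $E_T$ because the factor $(\llbracket u\rrbracket_{\ell}-k)_+$ itself has zero trace on $K_{32\rho}(x_o)\cap\partial E$ (this is the content of \cite[Lemma~2.1]{GLL}); hence $\vp$ extends by zero across $\Sigma$ and the integral identity can be carried over to all of $Q$ with no boundary term. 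Integration by parts in time of the term $\partial_t\llbracket u\rrbracket_{\ell}\,\vp$ produces the truncated primitive $\mathfrak{h}$, the extra term generated by differentiating the quotient has a sign by the ellipticity of $\widetilde{\bl A}$, and letting $\ell\to0$ and then $\varep\to0$ yields the sub-solution inequality for $u_k$ with $\widetilde{\bl A}(x,t,k+u_k,Du_k)$; the case $u_h=(h-u)_+$ is symmetric. If you want to keep your structure, you must either reproduce this trace-based choice of test function (or an equivalent limiting cutoff argument with a genuine estimate of the boundary-layer term), since the assertion that $u_k$ vanishes near $\Sigma$ cannot be used as it stands.
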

\begin{proof}
This is a boundary version of the arguments in
({\bf A}$_6$) of \cite[Chapter II]{dibe-sv} or Lemma~1.1 of \cite[Chapter 3]{DBGV-mono}.
For $\ell>0$, let $\llbracket u\rrbracket_{\ell}$ denote the Steklov average of $u$
in the time variable. In terms of the Steklov averages, the weak formulation of the solution $u$ to \eqref{Eq:1:6}
can be written as
\begin{align*}
\int_{E\times\{t\}}\pl_t\llbracket u\rrbracket_{\ell}\vp\,dx+\int_{E\times\{t\}} \llbracket\widetilde{\bl A}(x,t,u, Du)\rrbracket_{\ell}\cdot D\vp\,dx=0
\end{align*}
for all $0<t<T-\ell$ and for all $\vp\in W^{1,p}_o(E)\cap L^{\infty}(E)$ (see \cite[Chapter~II, Remark~1.1]{dibe-sv} for more details on this equivalent formulation).

Let us omit the reference to $x_o$. Consider an arbitrary interval $[t_1,t_2]\subset [s,t_o]$.
Let $\z$ be a piecewise smooth function in $K_{32\rho}\times(t_1,t_2)$,
vanishing on $\pl K_{32\rho}\times(t_1,t_2)$.
It is rather easy to show that the following is an admissible test function for $\varep>0$:
\[
\{K_{32\rho}\cap E\}\times(t_1,t_2)\ni(x,t)\mapsto\vp(x,t)=
\frac{(\llbracket u\rrbracket_{\ell}-k)_+}{(\llbracket u\rrbracket_{\ell}-k)_++\varep}\z.%\in W^{1,p}_o(E)\cap L^{\infty}(E)
\]
Moreover, since $\vp(\cdot,t)$ vanishes on $K_{32\rho}\cap\pl E$ in the sense of traces
for a.e. $t\in(t_1,t_2)$ (\cite[Lemma~2.1]{GLL}),
we may extend such $\vp$ to be zero outside $E_T$, and carry over
the integral formulation into $Q$. This remark together with a time integration in $(t_1,t_2)$ gives
\begin{align*}
\int_{t_1}^{t_2}\int_{K_{32\rho}}\pl_t\llbracket u\rrbracket_{\ell}\vp\,dxdt
+\int_{t_1}^{t_2}\int_{K_{32\rho}} \llbracket{\widetilde{\bl A}}(x,t,u, Du)\rrbracket_{\ell}\cdot D\vp\,dxdt=0
\end{align*}
The first term is estimated via an integration by parts, to obtain
\begin{align*}
\int_{t_1}^{t_2}\int_{K_{32\rho}}&\pl_t\llbracket u\rrbracket_{\ell}\vp\,dxdt
=\int_{t_1}^{t_2}\int_{K_{32\rho}}\pl_t\mathfrak{h}(\llbracket u\rrbracket_{\ell},\varep)\z\,dxdt\\
&=\int_{K_{32\rho}}\mathfrak{h}(\llbracket u\rrbracket_{\ell},\varep)\z\,dx\Big|_{t_1}^{t_2}
-\int_{t_1}^{t_2}\int_{K_{32\rho}}\mathfrak{h}(\llbracket u\rrbracket_{\ell},\varep)\pl_t\z\,dxdt,
\end{align*}
where we have set
\[
\mathfrak{h}(\llbracket u\rrbracket_{\ell},\varep)=\int^{\llbracket u\rrbracket_{\ell}}_k\frac{(s-k)_+}{(s-k)_++\varep}\,ds.
\]
The second term is estimated by
\begin{align*}
\int_{t_1}^{t_2}&\int_{K_{32\rho}} \llbracket{\widetilde{\bl A}}(x,t,u, Du)\rrbracket_{\ell}\cdot D\vp\,dxdt\\
&=\int_{t_1}^{t_2}\int_{K_{32\rho}} \llbracket{\widetilde{\bl A}}(x,t,u, Du)\rrbracket_{\ell}\cdot 
\bigg[D\z\frac{(\llbracket u\rrbracket_{\ell}-k)_+}{(\llbracket u\rrbracket_{\ell}-k)_++\varep}
	+\z\frac{\varep D(\llbracket u\rrbracket_{\ell}-k)_+}{\big((\llbracket u\rrbracket_{\ell}-k)_++\varep\big)^2}\bigg]\,dxdt.
\end{align*}
Combining all the terms we arrive at
\begin{align*}
\int_{K_{32\rho}}&\mathfrak{h}(\llbracket u\rrbracket_{\ell},\varep)\z\,dx\Big|_{t_1}^{t_2}
-\int_{t_1}^{t_2}\int_{K_{32\rho}}\mathfrak{h}(\llbracket u\rrbracket_{\ell},\varep)\pl_t\z\,dxdt\\
&+\int_{t_1}^{t_2}\int_{K_{32\rho}} \llbracket{\widetilde{\bl A}}(x,t,u, Du)\rrbracket_{\ell}\cdot 
D\z\frac{(\llbracket u\rrbracket_{\ell}-k)_+}{(\llbracket u\rrbracket_{\ell}-k)_++\varep}\,dxdt\\
&=-\varep\int_{t_1}^{t_2}\int_{K_{32\rho}} \z\llbracket{\widetilde{\bl A}}(x,t,u, Du)\rrbracket_{\ell}\cdot\frac{D(\llbracket u\rrbracket_{\ell}-k)_+}{\big((\llbracket u\rrbracket_{\ell}-k)_++\varep\big)^2}\,dxdt.
\end{align*}
The conclusion is reached by first letting $\ell\to0$ and then $\varep\to0$.
One only has to notice that the right-hand side produces a non-positive quantity in this process,
due to the ellipticity of $\widetilde{\bl A}$.
\end{proof}

Let $k$ be any number such that $k\ge \sup_{\Sigma} g$, and for $u_k$ as in Lemma~\ref{Lem:2:0}, set
\begin{equation}\label{Eq:super}
\left\{
\begin{aligned}
&\mu=\sup_Q u_k,\\ 
&v:Q\rightarrow\rr_+,\quad v\df=\mu-u_k.
\end{aligned}
\right.
\end{equation}
It is not hard to verify that $v$ is a non-negative, weak super-solution to
\eqref{Eq:1:1} in $Q$. More precisely, we can write
\[
v_t-\dvg\big(-\widetilde{\bl A}(x,t,k+\mu-v,-Dv)\big)\ge0\quad\text{ weakly in }Q.
\]
\begin{remark}\upshape
As shown in the proof, any extension of $\bl A$ outside $E_T$ would work for our purpose,
provided it verifies the similar structure conditions \eqref{Eq:1:2} and \eqref{Eq:5:1:3}--\eqref{Eq:5:1:4}.
For simplicity, in the following we will denote by $\bl A$ the extended function $\widetilde{\bl{A}}$.
\end{remark}
%%%%%%%%%%%%%%
\begin{remark}
{\normalfont The choice of $k$ in the definition of $u_k$ is done in order to guarantee
that $u_k$ can be extended to zero in $Q\backslash E_T$: this yields a function which is defined on the whole $Q$, and is needed in %Proposition~\ref{Prop:A:1:1} and 
Lemma~\ref{Lm:5:1}. Therefore, any other choice of $k$ which ensures the same extension of $u$ to the whole $Q$ is allowed.}
\end{remark}
%%%%%%%%%%%%%%%%%%%%%%%%%%%%%%%%%%%%%
\subsection{An $L^1$ Harnack Inequality for Solutions when $1<p<2$}
As we have already done when stating Lemma~\ref{lem-comp}, let $\Om$ be an open set in $\rn$, and 
$\Om_{S}:=\Om\times[0,S]$. As before, that there is no connection between $\Om$ and the set $E$.
\begin{proposition}%\label{Prop:A:1:1sol} 
Suppose $u$ is a non-negative, local
weak solution to \eqref{Eq:1:1}--\eqref{Eq:1:2} in $\Omega_{S}$
for $1<p<2$.  There exists a positive constant 
$\gm$ depending only on the data $\datap$, such that 
for all $[s_1,t_1]\subset [0,S]$, and $K_{2\rho}(y)\subset\Om$
\begin{equation}\label{Eq:A:1:2sol}
\sup_{s_1<\tau<t_1}\int_{K_\rho(y)} u(x,\tau)dx\le\gm
\inf_{s_1<\tau<t_1}\int_{K_{2\rho}(y)}u(x,\tau)dx
+\gm\Big(\frac{t_1-s_1}{\rho^\lm}\Big)^{\frac1{2-p}}
\end{equation}
where 
\begin{equation*}
\lm=N(p-2)+p.
\end{equation*}
The constant $\gm=\gm(p)\to\infty$ either as $p\to2$ or as 
$p\to1$.\index{Stability of constants}
\end{proposition}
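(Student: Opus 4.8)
The plan is to obtain \eqref{Eq:A:1:2sol} from two local facts — a ``transport'' inequality produced by testing the equation with a \emph{time--independent} cutoff, and a Caccioppoli energy estimate — tied together by the parabolic Sobolev inequality and a geometric iteration over nested cubes; this is the scheme of \cite[Chapter~6]{DBGV-mono}. Throughout one fixes the non-negative solution $u$, first assuming it bounded so that every quantity below is finite and checking that the final bound does not involve $\|u\|_{\infty}$ (the general case then follows by truncating $u$). Write $\Phi:=\sup_{s_1<\tau<t_1}\int_{K_{2\rho}(y)}u(\cdot,\tau)\,dx$ and $\theta:=t_1-s_1$. The \emph{transport step}: for $\rho\le r<r'\le 2\rho$ and a cutoff $\zeta=\zeta(x)\in C^\infty_o(K_{r'}(y))$ with $\zeta\equiv1$ on $K_r(y)$, $|D\zeta|\le\gamma/(r'-r)$, testing \eqref{Eq:1:1} with $\zeta^p$ and using \eqref{Eq:1:2} gives, for all $\tau_1,\tau_2\in(s_1,t_1)$,
\[
\Big|\int_{K_{r'}(y)}u(x,\tau_2)\zeta^p\,dx-\int_{K_{r'}(y)}u(x,\tau_1)\zeta^p\,dx\Big|\le\frac{\gamma}{r'-r}\int_{s_1}^{t_1}\!\!\int_{K_{r'}(y)}|Du|^{p-1}\,dx\,dt ,
\]
so that, taking $\sup$ over $\tau_2$ and $\inf$ over $\tau_1$,
\[
\sup_{s_1<\tau<t_1}\int_{K_r(y)}u(\cdot,\tau)\,dx\le\inf_{s_1<\tau<t_1}\int_{K_{r'}(y)}u(\cdot,\tau)\,dx+\frac{\gamma}{r'-r}\int_{s_1}^{t_1}\!\!\int_{K_{r'}(y)}|Du|^{p-1}\,dx\,dt .
\]
Everything is thereby reduced to estimating $\iint|Du|^{p-1}$ by $\Phi$, $\theta$ and $\rho$ alone.

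For the \emph{energy step} I would first use H\"older in space-time to write $\iint|Du|^{p-1}\le(\iint|Du|^p\psi^p)^{\frac{p-1}{p}}(\theta\rho^N)^{\frac1p}$ for an auxiliary cutoff $\psi$, and then bound the space-time $L^p$ norm of $|Du|$ (and of $u$) by an energy estimate for the truncations $u_j:=\min\{u,j\}$: testing with $u_j\psi^p$ is what keeps the parabolic term $L^1$-controllable, because $\int_0^u u_j(s)\,ds\le j\,u$, and it produces $\iint|Du_j|^p\psi^p\lesssim j\,\Phi+(r'-r)^{-p}\iint u_j^p\,dx\,dt+(\text{a term supported on }\{u>j\})$. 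The space-time $L^p$ norm of $u_j$ on the right is then handled by the parabolic Sobolev inequality in its $L^1$-on-time-slices form,
\[
\iint(u_j\psi)^{p\frac{N+1}{N}}\,dx\,dt\le\gamma\Big(\iint|D(u_j\psi)|^p\,dx\,dt\Big)\Big(\sup_\tau\int u_j\psi\,dx\Big)^{\frac pN},
\]
interpolated with $\Phi$ via H\"older (as $p\frac{N+1}{N}>p$). Closing the resulting inequalities for $\iint(u_j\psi)^{p(N+1)/N}$ and $\iint|Du_j|^p\psi^p$ by Young's inequality, choosing $j$ of the order of the average $\Phi/\rho^N$ so as to absorb the $\{u>j\}$ contribution by Chebyshev, and reading the exponents off homogeneity, yields a bound $\iint_{K_{r'}(y)}|Du|^{p-1}\le\gamma(r'-r)^{-\sigma}\rho^{\tau}\,\Phi^{\kappa}\,\theta+(\text{lower order})$ with $0<\kappa<1$ and with $\sigma,\tau$ dictated by scaling. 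This is the place where the singular scaling of the equation is used, and where the constant $\gamma$ degenerates as $p\to 1$ and $p\to 2$.

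Plugging this back into the transport step and applying Young's inequality to the term $\gamma(\cdots)\Phi^{\kappa}$ — writing it as $\varepsilon\Phi$ plus a remainder whose power on $(\cdots)$ is the conjugate exponent, which after simplification carries the intrinsic weight $(\theta/\rho^{\lambda})^{1/(2-p)}$, $\lambda=N(p-2)+p$ — gives, for every $\varepsilon\in(0,1)$ and $\rho\le r<r'\le 2\rho$,
\[
\sup_{\tau}\int_{K_r(y)}u(\cdot,\tau)\,dx\le\inf_{\tau}\int_{K_{2\rho}(y)}u(\cdot,\tau)\,dx+\varepsilon\sup_{\tau}\int_{K_{r'}(y)}u(\cdot,\tau)\,dx+\frac{\gamma}{\varepsilon^{a}(r'-r)^{b}}\Big(\frac{\theta}{\rho^{\lambda}}\Big)^{\frac1{2-p}},
\]
where I have already used $\inf_\tau\int_{K_{r'}}u\le\inf_\tau\int_{K_{2\rho}}u$ to make the first term independent of $r'$. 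Fixing $\varepsilon$ small and invoking the standard iteration lemma on the radii $r\in[\rho,2\rho]$ removes the bad dependence on $\varepsilon$ and on $r'-r$, the $(r'-r)^{-b}$ combining with the $\rho$-powers to reproduce exactly $(\theta/\rho^{\lambda})^{1/(2-p)}$, and delivers \eqref{Eq:A:1:2sol}.

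The genuinely delicate point is the energy step: in the sub-critical range a local weak solution need not be locally bounded, so no intermediate quantity may be controlled through $\|u\|_{\infty}$, and each one must be traced back to $\sup_\tau\int u\,dx$. This is precisely what forces the use of the parabolic Sobolev inequality in its $L^1$-on-time-slices form together with the truncation device for the term carrying the time derivative; once these are in place the transport step and the iteration are routine. We refer to \cite[Chapter~6]{DBGV-mono} for the detailed execution of this argument.
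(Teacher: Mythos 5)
The paper itself does not prove this proposition: it simply cites \cite[Appendix~A, Proposition~A.1.1]{DBGV-mono}, so the comparison is with that standard argument. Your overall architecture does match it: a transport estimate from a time-independent cutoff, which reduces everything to bounding $\iint|Du|^{p-1}$ in terms of $\Phi=\sup_\tau\int u\,dx$, $\theta=t_1-s_1$ and $\rho$, followed by Young's inequality and an interpolation/iteration over radii in $[\rho,2\rho]$. The transport step and the final iteration are correct and routine, as you say.

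The genuine gap is in the energy step, which is exactly where the difficulty of the proposition sits. Testing with $u_j\psi^p$, $u_j=\min\{u,j\}$, only controls $\iint|Du_j|^p\psi^p$, i.e.\ the gradient on $\{u<j\}$; worse, by \eqref{Eq:1:2} the cutoff term of that very Caccioppoli estimate contains a contribution of order $\frac{j}{r'-r}\iint_{\{u\ge j\}}|Du|^{p-1}\psi^{p-1}$, so the full, untruncated gradient on $\{u\ge j\}$ reappears. Your proposal to ``absorb the $\{u>j\}$ contribution by Chebyshev'' cannot work: Chebyshev only gives $|\{u>j\}|\le \Phi\theta/j$, and converting smallness of measure into smallness of $\iint_{\{u\ge j\}}|Du|^{p-1}$ would require a bound on $\iint|Du|^{p}$ in terms of $\Phi,\theta,\rho$ alone, which is unavailable in the singular range (this is the very obstruction you correctly point out in your last paragraph). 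Choosing $j\sim\Phi/\rho^N$ does not remove the term, and taking $j\ge\|u\|_\infty$ reintroduces $\|u\|_\infty$ into the constant. The cited proof avoids the dichotomy $\{u\le j\}/\{u>j\}$ altogether: for a free parameter $m>0$ and suitable $\alpha$ one writes
\[
\iint|Du|^{p-1}\zeta^{p-1}\,dx\,dt\le\Big(\iint|Du|^p(u+m)^{-\alpha}\zeta^p\,dx\,dt\Big)^{\frac{p-1}{p}}\Big(\iint(u+m)^{\alpha(p-1)}\,dx\,dt\Big)^{\frac1p},
\]
bounds the weighted energy by testing with $\zeta^p\big[(u+m)^{1-\alpha}-m^{1-\alpha}\big]$ (the singular weight does the job you assign to the truncation, and controls the \emph{full} gradient), estimates the remaining integrals by $\Phi$, $\theta$, $\rho$ and $m$, and finally optimizes in $m$, which is what produces the term $\big(\theta/\rho^{N(p-2)+p}\big)^{1/(2-p)}$; no parabolic Sobolev inequality is needed. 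Until your $\{u>j\}$ term is handled---either by switching to this weighted argument or by an actually executed dyadic summation over truncation levels---the proof does not close. A secondary point: the reduction ``first assume $u$ bounded, then truncate'' is not immediate either, since $\min\{u,j\}$ is only a super-solution rather than a solution, whereas the weighted argument requires no boundedness, which is why the cited proposition is stated without it.
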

\begin{proof}
See Proposition~A.1.1 in \cite[Appendix~A]{DBGV-mono}. Here conditions~\eqref{Eq:5:1:3}--\eqref{Eq:5:1:4} are not needed in the proof.
\end{proof}
%%%%%%%%%%%%%%%%%%%%%%%%%%%%%%%%%%%%%
\subsection{An Auxiliary Problem}\label{S:Ext}
Assume $\eta$ and $\bar T$ satisfy
\[
s<\eta<\eta+\bar T\le t_o,
\]
with $s,\,t_o$ as in the definition of $Q$ in \S~\ref{S:A:1}. 
Suppose $u_o$ is a non-negative, bounded, measurable function defined in $K_{32\rho}(x_o)$, with support in $K_{2\rho}(x_o)$.
We consider the Cauchy-Dirichlet problem
\begin{equation}\label{Eq:Aux:Prob}
\left\{
\begin{aligned}
&u_\tau-\dvg\bl{A}(x,\tau+\eta,u, Du) = 0\quad \text{ weakly in }\> K_{32\rho}(x_o)\times(0,\bar T],\\
&u(\cdot,\tau)\Big|_{\partial K_{32\rho}(x_o)}=0\quad \text{ a.e. }\ \tau\in(0,\bar T],\\
&u(\cdot,0)=u_o%,\ \ \text{ where }\ \ u_o\ge0,\ \ \textcolor{red}{u_o\ \text{ bounded}},\ \ \supp u_o\subseteq K_{2\rho}(x_o).
\end{aligned}
\right.
\end{equation}
We may apply \eqref{Eq:A:1:2sol} to the above solution in the pair of cubes $K_{2\rho}(x_o)$ and $K_{4\rho}(x_o)$, 
with the choices 
\begin{equation}\label{Eq:c1}
s_1=0,\quad t_1=c_1\left[\bint_{K_{2\rho}(x_o)}u_o\,dx\right]^{2-p}\rho^p,\quad
c_1=\frac{2^{(N-1)(2-p)}}{\gm^{2-p}}.
\end{equation}
As a result, if we define $\bar T:=t_1$, then we have
\begin{equation}\label{Eq:lower}
\bint_{K_{2\rho}(x_o)} u_o\,dx\le2^{N+1}\gm
\inf_{0<\tau<\bar T}\bint_{K_{4\rho}(x_o)}u(x,\tau)\,dx.
\end{equation}
It is well-known that solutions to \eqref{Eq:Aux:Prob} extinguish in finite time.
The estimate \eqref{Eq:lower} shows solutions will not extinguish before $\bar T$.
\subsection{A Harnack-type Inequality when $1<p\le p_*$}\label{S:H-sub}
We recall here the main result of \cite{fornaro-vespri}. The supremum or infimum of a function
is meant to be the essential one. Moreover, we work in general, open, bounded sets                   
$\Om\subset\rn$ and $\Om_S:=\Om\times[0,S]$. 
At this step, there is no connection between $\Om$ and the set $E$ we have considered so far.
\begin{theorem}\label{FV}
Let $u$ be a non-negative, locally bounded, local, weak solution to \eqref{Eq:1:1}--\eqref{Eq:1:2},  \eqref{Eq:5:1:3}--\eqref{Eq:5:1:4} in $\Om_S$, with $p$ satisfying \eqref{Eq:p-range}, and let $r\ge1$ be such that
\[
\lm_r\df=N(p-2)+rp
\] 
is strictly positive. Let $(y,s)\in \Om_S$, consider $\rho>0$ such that $K_{2\rho}(y)\subset \Om$, and set
\begin{equation}\label{Eq:H:1}
\theta=c_2\left[\bint_{K_{2\rho}(y)}u(x,s)\,dx\right]^{2-p},\qquad{\sig=\left[\frac{\mathlarger\bint_{K_{2\rho}(y)} u(x,s)\,dx}{\left(\mathlarger\bint_{K_{2\rho}(y)} u^r(x,s)\,dx\right)^{\frac1r}}\right]^{\frac{pr}{\lm_r}}}.
\end{equation}
Then there exist a constant $c_2\in(0,1)$ that can be determined only in terms of the data $\datap$, and two positive constants $\gm$ and $d$, that can be determined in terms of the data $\datap$ and $r$, such that if $K_{16\rho}(y)\times[s,s+\theta\rho^p]\subset \Om_S$,
then there holds
\begin{equation*}
\inf_{K_{2\rho}(y)\times[s+\frac34\theta\rho^p,s+\theta\rho^p]} u \ge\gm\,\sig^{d}\sup_{K_{\rho}(y)\times[s+\frac12\theta\rho^p,s+\theta\rho^p]} u,
\end{equation*}
\end{theorem}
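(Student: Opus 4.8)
The plan is to reduce the statement to the intrinsic Harnack-type inequality of Fornaro--Vespri \cite{fornaro-vespri}, which is precisely a Harnack estimate "valid for intrinsically rescaled cylinders." The strategy has three stages. First, I would fix the point $(y,s)$ and reduce to the case $\rho=1$, $y=0$, $s=0$ by the standard parabolic scaling $u(x,t)\mapsto \kappa^{-1}u(y+\rho x, s+\theta\rho^p t)$, choosing the multiplicative constant $\kappa$ so that the transformed function still solves an equation of the same structure \eqref{Eq:1:1}--\eqref{Eq:5:1:4} with the same data (the time dilation by $\theta\rho^p$ is exactly what makes the diffusion coefficient dimensionless, since $u$ has been normalized by its $L^1$ average on $K_{2\rho}(y)$ through the definition of $\theta$ in \eqref{Eq:H:1}). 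After this step the hypothesis $K_{16\rho}(y)\times[s,s+\theta\rho^p]\subset\Om_S$ becomes the inclusion of a fixed unit-size cylinder, and $c_2$ is the absolute constant that guarantees $\theta$ is small enough for the intrinsic geometry to close up.

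Second, on the normalized cylinder I would invoke the core result of \cite{fornaro-vespri}: for a non-negative, locally bounded weak solution in the sub-critical range, one has an expansion-of-positivity/weak-Harnack mechanism controlling $\inf_{K_2(0)\times[\frac34\theta,\theta]}u$ from below by a power of the $L^1$ average of the initial datum, and simultaneously an $L^1$-to-$L^\infty$ sup bound (a De Giorgi--Nash--Moser iteration, using \eqref{Eq:1:2} only) for $\sup_{K_1(0)\times[\frac12\theta,\theta]}u$ in terms of the same $L^1$ quantity but with a correction involving higher $L^r$ norms --- this is where the extinction-in-finite-time phenomenon forces the appearance of the ratio $\sigma$ in \eqref{Eq:H:1}. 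The exponent $d$ and the power $pr/\lambda_r$ on $\sigma$ come from balancing the scaling deficits in these two one-sided estimates; the condition $\lambda_r=N(p-2)+rp>0$ is exactly what is needed for the relevant Sobolev embedding / iteration to converge in the sub-critical regime, just as in the local boundedness theory of \cite[Chapter~6]{DBGV-mono}.

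Third, I would chain the two one-sided bounds: the lower bound for the infimum on the larger cube and later time slab, together with the upper bound for the supremum on the smaller cube and an overlapping time slab, both anchored to $\bint_{K_{2}(0)}u(x,0)\,dx$, combine to yield $\inf u\ge\gamma\,\sigma^d\,\sup u$ after undoing the normalization. The nonnegativity and the comparison-type/monotonicity structure \eqref{Eq:5:1:3}--\eqref{Eq:5:1:4} enter in the lower bound (expansion of positivity for the monotone operator), while \eqref{Eq:1:2} alone suffices for the upper bound. The main obstacle --- and the reason this is quoted rather than reproved --- is the lower bound: in the sub-critical range there is no true weak Harnack inequality (counterexamples in \cite{BIV2010} and \cite[Chapter~6]{DBGV-mono}), so one cannot simply propagate positivity from a measure-theoretic density; instead \cite{fornaro-vespri} must carefully track how fast the solution can decay toward its extinction time and pay for it with the $\sigma^d$ factor, and reproducing that delicate quantitative bookkeeping is precisely the technical heart that we take as given.
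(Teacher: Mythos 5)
Your proposal and the paper take the same basic route: Theorem~\ref{FV} is not reproved in the paper at all, it is quoted from \cite{fornaro-vespri}, and your decision to treat the quantitative bookkeeping of that paper as given is therefore consistent with what the authors do. However, there is one genuine gap: the statement you are asked to prove is \emph{not} verbatim the result of \cite{fornaro-vespri}. In Theorem~1.1 there the infimum is taken over $K_{2\rho}(y)\times\big[s+\tfrac34\theta\rho^p,\,s+\big(\tfrac34+\tfrac1{4^{p+1}}\big)\theta\rho^p\big]$, i.e.\ over a thin time slab strictly below the top of the cylinder, whereas the theorem as stated here takes the infimum all the way up to $s+\theta\rho^p$, the same top level as the supremum. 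This extension is exactly the only point that requires an argument beyond citation, and the paper supplies it by remarking that a careful inspection of the proof in \cite{fornaro-vespri}, compared with the analogous argument in \cite[Chapter~6, Section~11]{DBGV-mono}, shows the two cylinders may share the same top. Your proposal silently assumes the quoted result already comes in this form, so as written it does not establish the statement; you would need either to add that inspection argument or to check that the later application (the proof of Theorem~\ref{Thm:1:1} and Lemma~\ref{Lm:7:1}) can live with the smaller time interval.

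Two further, more minor points on your sketch of the underlying mechanism. First, you attribute the need for \eqref{Eq:5:1:3}--\eqref{Eq:5:1:4} to the expansion of positivity; in fact the expansion of positivity holds under \eqref{Eq:1:2} alone (the paper uses this explicitly in Lemma~\ref{Lm:8:1}), and the monotonicity and Lipschitz conditions are needed in \cite{fornaro-vespri} because their argument invokes a comparison principle of the type of Lemma~\ref{lem-comp} with solutions of an auxiliary Dirichlet problem. Second, the normalization $u\mapsto\kappa^{-1}u$ is not innocuous for the full structure, since the Lipschitz condition \eqref{Eq:5:1:4} is not invariant under amplitude rescaling (the constant $\Lm$ gets multiplied by $\kappa$); this is a detail the quoted proof handles, but your reduction step should not present it as a purely routine scaling.
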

\begin{remark}\normalfont
We point out that Theorem~\ref{FV} holds not just for $p$ in \eqref{Eq:p-range}, but for any pair of $p\in(1,2)$ and $r\ge1$, such that $\lm_r>0$. See the discussion following Theorem~1.1 in \cite{fornaro-vespri}. Here we limited ourselves to the statement which is needed in our context.
\end{remark}
%%%%%%%
\begin{remark}
{\normalfont The statement given in \cite{fornaro-vespri} is slightly different, as far as the top of the cylinder, where the infimum is taken, is concerned. %\textcolor{red}{
Indeed, Theorem~1.1 of \cite{fornaro-vespri} states that
\begin{equation*}
\inf_{K_{2\rho}(y)\times[s+\frac34\theta\rho^p,s+\left(\frac34+\frac1{4^{p+1}}\right)\theta\rho^p]} u \ge\gm\,\sig^{d}\sup_{K_{\rho}(y)\times[s+\frac12\theta\rho^p,s+\theta\rho^p]} u.
\end{equation*}
However, a careful inspection of the proof given in \cite{fornaro-vespri}, and a comparison with the proof of the analogous result given in \cite[Chapter~6, Section~11]{DBGV-mono}, shows that  the top of the cylinders can be taken to be the same, both for the supremum and the infimum. We refrain from giving further details here.}%}
\end{remark}
%%%%%%%%%%%%%%%%%%%%%%%%%%%%%%%%%%%%%
\subsection{A Lower Bound on the Super-Solution $v$ Defined in \eqref{Eq:super}}\label{S:low-b}
From here on we let 
\[
c\df=\min\{c_1,\,c_2\},
\] 
which are given in \eqref{Eq:c1} and in \eqref{Eq:H:1}, and $\bar T$ is accordingly redefined through $c$. We point out that either it remains the same, or it is shortened. 
The following lemma has been shown in \cite[Lemma 5.1]{GLL}. 
\begin{lemma}\label{Lm:5:1}
Let $Q$, $u_k$, $\mu$, $v$ be as in \S~\ref{S:A:1}, consider $(x_o,\eta)\in\Sigma$ with $s<\eta\le t_o$, let
\[
\theta=c\left[\bint_{K_{2\rho}(x_o)}v(x,\eta)\,dx\right]^{2-p},
\]
and assume that 
\begin{equation*}
s\le \eta-\theta\rho^p<\eta\le t_o. 
\end{equation*}
Then, if we let 
\begin{align*}
&\dl(\rho)\df=\frac{{\rm cap}_p\big(K_{\rho}(x_o)\backslash E,K_{\frac32\rho}(x_o)\big)}{{\rm cap}_p\big(K_{\rho}(x_o),K_{\frac32\rho}(x_o)\big)},
\end{align*}
there exists a constant $\gm>1$ that depends only on the data $\datap$, such that
\begin{equation}\label{Eq:low-bd1}
\mu\,[\dl(\rho)]^{\frac1{p-1}}\le\gm\, \bint_{K_{2\rho}(x_o)}v(x,\eta)\,dx.
\end{equation}
\end{lemma}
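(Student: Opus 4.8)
The plan is to follow the parabolic version of the Gariepy--Ziemer scheme recalled in \eqref{Eq:sketch:1}--\eqref{Eq:sketch:3}; since the statement is literally \cite[Lemma~5.1]{GLL}, the task is to recall its three ingredients and to check that the restriction $p\le p_*$ enters nowhere. \emph{Step~1 (capacitary lower bound).} Fix a cut-off $\z\in C^\infty_o(K_{\frac32\rho}(x_o))$, $0\le\z\le1$, with $\z\equiv1$ on $K_\rho(x_o)$ and $|D\z|\le\gm/\rho$. Since $u_k\equiv0$ on $K_{32\rho}(x_o)\setminus E$, the super-solution $v$ of \eqref{Eq:super} equals $\mu$ there, so for a.e.\ $t$ the function $v(\cdot,t)\z$ lies in $W^{1,p}_o(K_{\frac32\rho}(x_o))$ and equals $\mu$ on $K_\rho(x_o)\setminus E$; by the variational definition of $p$-capacity (Appendix~\ref{S:cap}),
\[
\mu^p\,{\rm cap}_p\big(K_\rho(x_o)\setminus E,\,K_{\frac32\rho}(x_o)\big)\le\int_{K_{\frac32\rho}(x_o)}|D(v(\cdot,t)\z)|^p\,dx\qquad\text{for a.e. }t.
\]
Integrating over $t\in(\eta-\theta\rho^p,\eta)$ (an interval contained in $[s,t_o]$ by hypothesis) and using ${\rm cap}_p\big(K_\rho(x_o),K_{\frac32\rho}(x_o)\big)=c(N,p)\rho^{N-p}$, this gives
\[
\mu^p\,[\dl(\rho)]\,\theta\rho^{N}\le\gm\iint_{K_{\frac32\rho}(x_o)\times(\eta-\theta\rho^p,\eta)}|D(v\z)|^p\,dxdt.
\]

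\emph{Step~2 (energy estimate on the intrinsic cylinder).} Next I would test the weak super-solution formulation for $v$ — equivalently, exploit that $\mu-v=u_k$ is a non-negative, bounded, local sub-solution across $S_T$ (Lemma~\ref{Lem:2:0}) — against suitable products of $\z$ and truncations of $v$, and combine the structure condition \eqref{Eq:1:2}, Young's inequality and the bounds $0\le v\le\mu$ (so $v^p\le\mu^{p-1}v$, $v^2\le\mu v$) to reach the parabolic analogue of \eqref{Eq:sketch:1},
\[
\iint_{K_{\frac32\rho}(x_o)\times(\eta-\theta\rho^p,\eta)}|D(v\z)|^p\,dxdt\le\gm\,\theta\rho^p\,\mu\,\rho^{N-p}\Big(\bint_{K_{2\rho}(x_o)}v(x,\eta)\,dx\Big)^{p-1}.
\]
Here two points must be watched: one must keep $v\z$ undivided — splitting $v\z=\mu\z-u_k\z$ would introduce a spurious term $\sim\mu^p\rho^{N-p}\theta\rho^p$ that swamps the left side of Step~1 — and the several $L^1$-norms of $v$ appearing at intermediate time levels must be routed to the slice $\{t=\eta\}$.

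\emph{Step~3 (transfer to $\{t=\eta\}$ and conclusion).} Controlling $\int_{K_{2\rho}(x_o)}v(x,\tau)\,dx$ for $\tau\in(\eta-\theta\rho^p,\eta)$ in terms of $\int_{K_{2\rho}(x_o)}v(x,\eta)\,dx$ is exactly where the intrinsic waiting time $\theta=c\,[\bint_{K_{2\rho}(x_o)}v(x,\eta)\,dx]^{2-p}$ is used: comparing $v$ from below, on a time interval issuing from $\eta-\theta\rho^p$, with the solution of the auxiliary problem \eqref{Eq:Aux:Prob} started from a suitably truncated datum supported in $K_{2\rho}(x_o)$ — admissible by the Weak Comparison Principle (Lemma~\ref{lem-comp} and Remark~\ref{rmk-comp}) — and invoking the non-extinction estimate \eqref{Eq:lower} shows that $v$ cannot shed a fixed fraction of its $L^1$-mass over a span of length $\theta\rho^p$; the residual error terms from Step~2 are then absorbed because $c=\min\{c_1,c_2\}$ is small. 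Combining the two displays of Steps~1--2 and cancelling the common factor $\theta\rho^{N}\mu$ leaves $\mu^{p-1}\,\dl(\rho)\le\gm\big(\bint_{K_{2\rho}(x_o)}v(x,\eta)\,dx\big)^{p-1}$, and raising to the power $\tfrac1{p-1}$ yields \eqref{Eq:low-bd1}. (If $\bint_{K_{2\rho}(x_o)}v(x,\eta)\,dx$ is of the order of $\mu$ or larger the estimate is immediate, since $\dl(\rho)\le1$; the genuine case is the complementary one, which is precisely where the absorption above is needed.)

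\emph{Main obstacle.} The delicate part is Steps~2--3 taken together: producing an energy estimate for the \emph{super}-solution $v$ with the sharp right-hand side $\mu\,\rho^{N-p}(\bint v)^{p-1}$ — rather than a cruder bound carrying a ``free'' $\mu^p|D\z|^p$ term — while simultaneously keeping the backward-in-time behaviour of $\int v(\cdot,\tau)\,dx$ on the intrinsic cylinder under control. Both are carried out in full in \cite[Lemma~5.1]{GLL}, whose proof uses only \eqref{Eq:1:2}, the comparison principle and $1<p<2$; the restriction $p\le p_*$ is not used there, so the statement transfers verbatim to the present setting.
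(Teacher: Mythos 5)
Your overall move is the same as the paper's: Lemma~\ref{Lm:5:1} is not reproved in this paper at all, it is quoted from \cite[Lemma~5.1]{GLL} together with the observation that the proof there works for every $1<p<2$ and does not use \eqref{Eq:5:1:3}--\eqref{Eq:5:1:4}; your Steps~1--2 (capacity lower bound for $v\zeta$ on $K_{\rho}(x_o)\setminus E$, energy estimate for $v=\mu-u_k$ via Lemma~\ref{Lem:2:0} and \eqref{Eq:1:2}, with $v^p\le\mu^{p-1}v$) also reflect the actual mechanism correctly.

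The point that needs correcting is Step~3 and your closing claim that the proof in \cite{GLL} ``uses only \eqref{Eq:1:2}, the comparison principle and $1<p<2$.'' It does not use the comparison principle, and it cannot: the remark immediately following Lemma~\ref{Lm:5:1} states that the lemma is independent of \eqref{Eq:5:1:3}--\eqref{Eq:5:1:4}, and these are exactly the hypotheses Lemma~\ref{lem-comp} requires. In \cite{GLL} the intermediate time slices are routed to the level $t=\eta$ by an $L^1$ Harnack-type estimate for non-negative \emph{super-solutions} (the super-solution analogue of \eqref{Eq:A:1:2sol}, cf.\ \cite[Appendix~A]{DBGV-mono}): with the intrinsic choice $\theta=c\,[\bint_{K_{2\rho}(x_o)}v(x,\eta)\,dx]^{2-p}$ the error term $\big(\theta\rho^p/\rho^{\lm}\big)^{\frac1{2-p}}$ is precisely of order $\rho^N\bint_{K_{2\rho}(x_o)}v(x,\eta)\,dx$, which is what allows the bottom-slice and cutoff terms of the energy estimate to be absorbed. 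The auxiliary problem \eqref{Eq:Aux:Prob}, the non-extinction bound \eqref{Eq:lower} and the comparison principle enter only later, in \S~\ref{S:final}, because Theorem~\ref{FV} is an inequality for solutions rather than super-solutions. As a standalone argument your comparison-based Step~3 would also be problematic: it imports hypotheses the lemma does not need; the life span $\bar T$ of the auxiliary solution is dictated by the datum at the \emph{earlier} time level, so you must argue separately that it reaches $\eta$; and \eqref{Eq:lower} delivers the average of $v(\cdot,\eta)$ over $K_{4\rho}(x_o)$, not over $K_{2\rho}(x_o)$ as required in \eqref{Eq:low-bd1}. Since, like the paper, you ultimately defer to \cite{GLL}, your conclusion stands, but the attribution of the mechanism — and hence your verification that nothing beyond $1<p<2$ and \eqref{Eq:1:2} is used — should be corrected accordingly.
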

\begin{remark}\normalfont
Lemma~\ref{Lm:5:1} holds for all $1<p<2$, and does not depend on conditions~\eqref{Eq:5:1:3}--\eqref{Eq:5:1:4}.
\end{remark}
%%%%%%%%%%%%%%%%%%%%%%%%%%%%%%%%%%%%%
\section{Proof of Theorem~\ref{Thm:1:1}}\label{S:final}
Consider \eqref{Eq:Aux:Prob} with $u_o(x)=v(x,\eta)\chi_{K_{2\rho}(x_o)}(x)$. 
%In light of \eqref{Eq:lower},  it suffices to work in the shorter cylinder $K_{32\rho}(x_o)\times[0,T_{\text{low}}]$ instead of $K_{32\rho}(x_o)\times[0,\bar T]$, and 
Assume that 
\begin{align*}
s<\eta<\eta+\bar T\le t_o,\quad\text{ where }\bar T=c\left[\bint_{K_{2\rho}(x_o)}v(x,\eta)\,dx\right]^{2-p}\rho^p.
\end{align*}
%This ensures that $K_{32\rho}(x_o)\times(t_o,t_o+T_{\text{low}}]\subseteq Q$, with $Q$ defined in \S~\ref{S:A:1}.
%\noi If we rely on \eqref{Eq:A:1:2sol} over $K_{2\rho}(x_o)$ and $K_{4\rho}(x_o)$, with \textcolor{red}{$s_1=0$ and $t_1=\frac12 T_{\text{low}}$}, then for any $\tau\in[0,\frac12T_{\text{low}}]$, due to the explicit expression of $c_1$ in \eqref{Eq:lower}, we have
According to \eqref{Eq:lower}, we have
\begin{align*}
\frac1{2^{N+1}\gm}\bint_{K_{2\rho}(x_o)} u_o(x)\,dx\le\inf_{0<\tau<\bar T}\bint_{K_{4\rho}(x_o)}u(x,\tau)\,dx.
\end{align*}
%If we let $\tau\in[\frac14T_{\text{low}},\frac12T_{\text{low}}]$, 
This together with \eqref{Eq:low-bd1} yields
\begin{align*}
\sup_{K_{4\rho}(x_o)\times[\frac12\bar T,\bar T]} u&\ge\frac1{2^{N+1}\gm}\,\bint_{K_{2\rho}(x_o)} u_o(x)\,dx\\
&=\frac1{2^{N+1}\gm}\,\bint_{K_{2\rho}(x_o)}v(x,\eta)\,dx\\
&\ge\gm\mu[\dl(\rho)]^{\frac1{p-1}},
\end{align*}
where $\gm$ in the last line takes into account all the constants. 

We can now apply Theorem~\ref{FV} with $\Om=K_{32\rho}(x_o)$ and $(y,s)=(x_o,0)$, to conclude that
\begin{equation*}
\inf_{K_{2\rho}(x_o)\times[\frac34\bar T,\bar T]} u \ge\gm\sig^{d}\mu[\dl(\rho)]^{\frac1{p-1}},
\end{equation*}
where
\[
\sig\df=\left[\frac{\mathlarger\bint_{K_{2\rho}(x_o)} u_o\,dx}{\left(\mathlarger\bint_{K_{2\rho}(x_o)} u^r_o\,dx\right)^{\frac1r}}\right]^{\frac{pr}{\lm_r}},
\]
$r>1$ is such that $\lm_r>0$, and $d$ depends on the data and on $r$. 
Since 
\[
\bint_{K_{2\rho}(x_o)} u_o\,dx\overset{\eqref{Eq:low-bd1}}{\ge}\frac1\gm\mu[\dl(\rho)]^{\frac1{p-1}},\qquad\left(\bint_{K_{2\rho}(x_o)} u^r_o\,dx\right)^{\frac1r}\le
\mu^{\frac{r-1}r}\left(\bint_{K_{2\rho}(x_o)}u_o\,dx\right)^{\frac1r},
\]
we eventually obtain
\begin{equation*}
\inf_{K_{2\rho}(x_o)\times[\frac34\bar T,\bar T]} u \ge\gm\mu[\dl(\rho)]^{\frac1{p-1}\left(1+d\frac{p(r-1)}{\lm_r}\right)}.
\end{equation*}
By Lemma~\ref{lem-comp} and Remark~\ref{rmk-comp}, we have $v(x,\tau+\eta)\ge u(x,\tau)$ a.e. in $K_{32\rho}(x_o)\times[0,\bar T]$. Therefore, setting 
\begin{equation}\label{Eq:defin:qo}
q_o\df=\frac1{p-1}\left(1+d\frac{p(r-1)}{\lm_r}\right),
\end{equation} 
we conclude
\begin{equation}\label{Eq:final-1}
\mu[\dl(\rho)]^{q_o}\le\gm\inf_{K_{2\rho}(x_o)}v(\cdot,\tau)
\end{equation}
for all 
\begin{equation}\label{Eq:final-2}
%\begin{aligned}
\tau\in\left[\eta+\frac34\theta\rho^p,\eta+\theta\rho^p\right],\quad\text{ with }\quad\theta=c\left[\bint_{K_{2\rho}(x_o)}v(x,\eta)\,dx\right]^{\scriptstyle2-p}.
%\end{aligned}
\end{equation}
%where $c\in(0,1)$ is the constant of Theorem~\ref{FV}. 

Since \eqref{Eq:final-1}--\eqref{Eq:final-2} correspond to (5.2)--(5.3) of \cite{GLL}, from here on we can conclude the proof as in \cite[Section~6]{GLL}, up to minor adaptations of the main argument, which we now discuss.

A key step of the induction argument in \cite[Section~6.3]{GLL} is to fit a sequence of
intrinsically scaled cylinders in one another. There we chose to present this trick in a sequence
of {\it centered} cylinders, such that the oscillation of $u$ is quantitatively reduced therein.
Here we take the opportunity to point out that actually we may choose a nested sequence of {\it backward} cylinders 
which will do the same job.
The modification is based on the following lemma.
\begin{lemma}\label{Lm:7:1}
Fix $(x_o,t_o)\in S_T$, $s$ such that $0<s<t_o\le T$, let $Q$, $u_k$, $\mu$, $v$ be as in \S~\ref{S:A:1}, consider %$\pto\in\Sigma$ with $s<t_o$, let
$\widetilde\theta=c\mu^{2-p}$ with $c$ as in \S~\ref{S:low-b},
%where $c$ is the same constant as in \eqref{Eq:H:1}, 
and assume that 
\begin{equation*}%\label{Eq:5:0}
s\le t_o-2\widetilde\theta(2\rho)^p<t_o. 
\end{equation*}
Then, there exists a constant $\widetilde\gm>1$ that depends only on the data $\datap$, such that
\begin{equation*}%\label{Eq:low-bd2}
\mu\,[\dl(\rho)]^{q_o}\le\widetilde\gm \inf_{K_{2\rho}(x_o)} v(\cdot,\tau)
\quad\text{ for all }\ \tau\in[t_o-2\widetilde\theta\rho^p,t_o].
\end{equation*}
\end{lemma}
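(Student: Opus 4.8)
The plan is to deduce Lemma~\ref{Lm:7:1} from the already-established estimates \eqref{Eq:final-1}--\eqref{Eq:final-2} by iterating the pointwise-in-time bound and choosing the free parameter $\eta$ appropriately. The heart of the matter is that \eqref{Eq:final-1} holds for any admissible time level $\eta$ (subject to the constraint \eqref{Eq:final-2} fitting inside the cylinder $Q$), but with $\theta$ depending on $\eta$ through the average $\bint_{K_{2\rho}(x_o)}v(x,\eta)\,dx$, whereas Lemma~\ref{Lm:7:1} demands the conclusion on a \emph{fixed} backward time-interval $[t_o-2\widetilde\theta\rho^p,t_o]$ with $\widetilde\theta=c\mu^{2-p}$ depending only on the global supremum $\mu$. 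So first I would record the trivial a priori bound $0\le v\le\mu$ in $Q$, which gives $\bint_{K_{2\rho}(x_o)}v(x,\eta)\,dx\le\mu$ and hence $\theta\le c\mu^{2-p}=\widetilde\theta$ for \emph{every} choice of $\eta$. This monotonicity is what lets us replace the intrinsic, $\eta$-dependent time-scale by the uniform one $\widetilde\theta$.

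\medskip

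Next I would run the following dichotomy/iteration. Fix the target time $\tau\in[t_o-2\widetilde\theta\rho^p,t_o]$. If $\inf_{K_{2\rho}(x_o)}v(\cdot,\tau)\ge\frac12\mu$, there is nothing to prove since $[\dl(\rho)]^{q_o}\le1$ (the capacitary density ratio is at most $1$, see Appendix~\ref{S:cap}), so one may take $\widetilde\gm=2$. Otherwise, I want to produce an earlier time level $\eta\le\tau$ with $\eta\ge t_o-2\widetilde\theta(2\rho)^p\ge s$ such that $\tau\in[\eta+\frac34\theta\rho^p,\eta+\theta\rho^p]$ and then invoke \eqref{Eq:final-1}. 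The natural choice is to set $\eta=\tau-\theta\rho^p$ — but $\theta$ itself depends on $\bint_{K_{2\rho}(x_o)}v(\cdot,\eta)$, so this is implicit. To resolve this, I would argue by a continuity/connectedness or fixed-point argument on the map $\eta\mapsto\eta+\theta(\eta)\rho^p$ (where $\theta(\eta)=c[\bint_{K_{2\rho}(x_o)}v(x,\eta)\,dx]^{2-p}$): the function $s\mapsto\bint_{K_{2\rho}(x_o)}v(x,s)\,dx$ is continuous in $s$ because $v\in C(0,T;L^2(E))$ (hence its $L^1$ average on a fixed cube is continuous), so $\eta\mapsto\eta+\theta(\eta)\rho^p$ is continuous and strictly increasing; since it ranges over an interval containing $\tau$ provided the endpoint $t_o-2\widetilde\theta(2\rho)^p$ and its image straddle $\tau$, the intermediate value theorem furnishes the desired $\eta$. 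I would check the window constraint: since $\theta\le\widetilde\theta$, we have $\eta=\tau-\theta\rho^p\ge\tau-\widetilde\theta\rho^p\ge t_o-3\widetilde\theta\rho^p\ge t_o-2\widetilde\theta(2\rho)^p\ge s$ for $p\le2$ (indeed $2\cdot2^p\ge3$), and $\eta+\theta\rho^p=\tau\le t_o$, so \eqref{Eq:final-2} is satisfied. Then \eqref{Eq:final-1} gives $\mu[\dl(\rho)]^{q_o}\le\gm\inf_{K_{2\rho}(x_o)}v(\cdot,\tau)$, which is the claim with $\widetilde\gm=\max\{2,\gm\}$.

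\medskip

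The step I expect to be the main obstacle is exactly the selection of the time level $\eta$: one must guarantee that the continuous, increasing map $\eta\mapsto\eta+\theta(\eta)\rho^p$ actually attains the value $\tau$ while keeping $\eta$ inside the admissible range $[t_o-2\widetilde\theta(2\rho)^p,\tau]$. The delicate point is the lower end — if $\bint_{K_{2\rho}(x_o)}v(x,\eta)\,dx$ is very small, then $\theta(\eta)$ is tiny and $\eta+\theta(\eta)\rho^p$ is barely above $\eta$, so one needs the starting point of the sweep to be taken low enough; conversely if the average is comparable to $\mu$, then $\theta(\eta)\approx\widetilde\theta$ and the constraint $s\le t_o-2\widetilde\theta(2\rho)^p$ is precisely what makes room. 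A clean way to package this, avoiding any case analysis on the size of the average, is to start the sweep at $\eta_0:=t_o-2\widetilde\theta(2\rho)^p$ (equivalently, at $\tau-2\widetilde\theta(2\rho)^p$, whichever is admissible) and note that $\eta_0+\theta(\eta_0)\rho^p\le\eta_0+\widetilde\theta\rho^p\le\tau$ while $\eta=\tau$ gives image $>\tau$; continuity then closes the gap. This is the same mechanism as in \cite[Section~6]{GLL}, merely re-centered so that $t_o$ sits at the top of every cylinder rather than in the middle; all remaining bookkeeping — tracking that the constant $\widetilde\gm$ still depends only on $\datap$, and that the final oscillation-decay induction goes through verbatim with backward instead of centered cylinders — is routine and I would simply refer to \cite[Section~6]{GLL} for it.
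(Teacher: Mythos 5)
Your argument is essentially the paper's own proof: both exploit $0\le v\le\mu$ to get $\theta(\eta)\le\widetilde\theta$, and then use continuity of $\eta\mapsto\eta+\theta(\eta)\rho^p$ together with the intermediate value theorem to select, for each $\tau\in[t_o-2\widetilde\theta\rho^p,t_o]$, an admissible level $\eta$ with $\eta+\theta(\eta)\rho^p=\tau$, after which \eqref{Eq:final-1}--\eqref{Eq:final-2} yield the claim (the paper packages this with a first hitting time $t_*$ of $t_o$, you sweep for each fixed $\tau$, which is the same mechanism). Two small touch-ups: drop the unjustified assertion that this map is strictly increasing (only continuity is needed for the intermediate value theorem), and also verify the hypothesis $s\le\eta-\theta(\eta)\rho^p$ of Lemma~\ref{Lm:5:1}, which underlies \eqref{Eq:final-1}; it follows from the very bounds you already use, namely $\eta\ge t_o-3\widetilde\theta\rho^p$, $\theta(\eta)\le\widetilde\theta$ and $2\cdot 2^p\ge 4$.
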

\begin{proof}
For simplicity, let us assume \eqref{Eq:final-1} holds for $\tau=\eta+\theta\rho^p$, where
$\theta$ is defined in \eqref{Eq:final-2}.
Consider the closed and bounded interval
\[
I\df=[t_o-3\widetilde\theta\rho^p,t_o]\subset[t_o-2\widetilde\theta(2\rho)^p,t_o].
\]
Using $0\le v\le \mu$, it is not hard to see that, for any $\eta\ge t_o-3\widetilde\theta\rho^p$, we have
\[
\eta-c\bigg[\bint_{K_{2\rho}(x_o)}v(x,\eta)\,dx\bigg]^{2-p}\rho^p\ge t_o-2\widetilde\theta(2\rho)^p.
\]
Consequently, Lemma~\ref{Lm:5:1} can be applied for all $\eta\in I$.

Next, we introduce the function $f:I\to\rr$ defined by
\[
f(\eta)=\eta+c\bigg[\bint_{K_{2\rho}(x_o)}v(x,\eta)\,dx\bigg]^{2-p}\rho^p.
\]
It is straightforward to check that $f\in C(I)$.
%Attention is called for a slight abuse of the symbol $t$, lest confusion arises.
%Indeed, it appeared in the definition of $Q$ in \S~\ref{S:A:1}, which has been fixed to be $t_o$ now.
%On the other hand, we use $t$ once more as a running variable in the proof.
Using $0\le v\le \mu$ again, a simple calculation yields that
\begin{equation*}%\label{Eq:6:4}
\begin{aligned}
&\min_I f\le f(t_o-3\widetilde\theta\rho^p)\le t_o-2\widetilde\theta\rho^p,\\
&\max_I f\ge f(t_o)\ge t_o.
%&\max_I f\le t_o+2\theta\rho^2.
\end{aligned}
\end{equation*}
Thus, by the mean value theorem, there exists $t_*\in I$ such that $f(t_*)=t_o$. Without loss of generality,
we may assume $t_*$ is the smallest among such numbers in $I$. Accordingly, we define
$I_*:=[t_o-3\widetilde\theta\rho^p,t_*]$ and $J:=[t_o-2\widetilde\theta\rho^p,t_o]$.
%By Lemma~\ref{Lm:5:1},
%\[
%\mu\,[\dl(\rho)]^{\frac1{p-1}}\le\gm\,\dashint_{K_{2\rho}(x_o)}v(\cdot,\tau)\,dx\quad\text{ for all }
%\tau\in I_*\df{=}[t_o-3\widetilde\theta\rho^p,t_*]
%\]
As a result,  there holds $J\subset f(I_*)$, and we can conclude that $f(\cdot)$ 
attains all values of $J$ as its argument ranges over $I_*$. 
Moreover, since $t_*$ is the first time when $f(t_*)=t_o$ holds, we must have $f(\eta)<t_o$
for all $\eta\in I_*\setminus\{t_*\}=[t_o-3\widetilde\theta\rho^p,t_*)$. 
Consequently, we are allowed to apply \eqref{Eq:final-1}--\eqref{Eq:final-2} for all $\eta\in I_*$,
and conclude that
\[
\mu\,[\dl(\rho)]^{q_o}\le\widetilde\gm\inf_{K_{2\rho}}v(\cdot,\tau)\quad\text{ for all }\tau\in[t_o-2\widetilde\theta\rho^p,t_o],
\]
for some properly defined positive constant $\widetilde\gm$.
\end{proof}
\begin{remark}
{\normalfont Due to the estimates we relied upon, it is apparent that we have a sharp disconnect in the characterization of the boundary behavior between $p\in\left(1,p_*\right]$ considered here, and $p>p_*$ studied in \cite{GLL, GL}.
}
\end{remark}
\begin{remark}
{\normalfont If \eqref{Eq:1:1} reduces to the prototype parabolic $p$-laplacian \eqref{p-lapl},
%\[
%u_t-\dvg(|Du|^{p-2}Du)=0,
%\]
then by (2.10) of \cite{BIV2010}, $d$ has an explicit expression, namely $d=1+\frac{\lm_r}{pr(2-p)}$, and consequently also $q_o$ in \eqref{Eq:defin:qo}.
}
\end{remark}
%%%%%%%%%%%%%%%%%%%%%%%%%%%%%%%%%%%%%
\section{Proof of Corollary~\ref{cor:holder}}\label{S:7}
We first present the main ingredient of the proof in the following Lemma~\ref{Lm:8:1}, which plays a similar
role in the induction argument as Lemma~\ref{Lm:7:1}.
Meanwhile, we emphasize that Lemma~\ref{Lm:8:1} holds for all $1<p<2$ without 
conditions~\eqref{Eq:5:1:3}--\eqref{Eq:5:1:4}.
Instead of Harnack's inequality, we employ the {\it expansion of positivity} for non-negative super-solutions
to \eqref{Eq:1:1}--\eqref{Eq:1:2};
see \cite[Chapter~4, Proposition~5.1]{DBGV-mono}.
\begin{lemma}\label{Lm:8:1}
Let $Q$, $u_k$, $\mu$, $v$ be as in \S~\ref{S:A:1}, consider $\pto\in\Sigma$ with $s<t_o$, let
$\widetilde\theta=c\mu^{2-p}$ with $c$ as in \S~\ref{S:low-b},
%where $c$ is the same constant as in \eqref{Eq:H:1}, 
assume that 
\begin{equation*}%\label{Eq:5:0}
s\le t_o-2\widetilde\theta(2\rho)^p<t_o,
\end{equation*}
and that the complement of $E$ is uniformly $p$-fat.
Then, there exists a constant $\widetilde\gm>1$ that depends only on the data $\datap$
and $\gm_o$, such that for all $0<\rho<\bar\rho$,
\begin{equation*}%\label{Eq:low-bd2}
\mu\le\widetilde\gm \inf_{K_{2\rho}(x_o)} v(\cdot,\tau)
\quad\text{ for all }\ \tau\in[t_o-2\widetilde\theta\rho^p,t_o].
\end{equation*}
\end{lemma}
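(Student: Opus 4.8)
\textbf{Proof proposal for Lemma~\ref{Lm:8:1}.}

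The plan is to mirror the structure of the proof of Lemma~\ref{Lm:7:1}, replacing the two ingredients that depended on the monotonicity/Lipschitz conditions \eqref{Eq:5:1:3}--\eqref{Eq:5:1:4} and the comparison principle with tools available under the bare structure condition \eqref{Eq:1:2}. The first such replacement concerns the lower bound \eqref{Eq:low-bd1}: by the Remark following Lemma~\ref{Lm:5:1}, that estimate holds for all $1<p<2$ without \eqref{Eq:5:1:3}--\eqref{Eq:5:1:4}, so it remains available verbatim. The second, and more substantial, replacement concerns the step where Lemma~\ref{Lm:7:1} invoked Theorem~\ref{FV} together with the comparison principle to pass from an $L^1$ lower bound on $v$ to a pointwise infimum bound. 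Here, since $v$ defined in \eqref{Eq:super} is a genuine non-negative weak super-solution to \eqref{Eq:1:1}--\eqref{Eq:1:2} in $Q$ (as noted after \eqref{Eq:super}), I would instead invoke the \emph{expansion of positivity} for non-negative super-solutions, namely \cite[Chapter~4, Proposition~5.1]{DBGV-mono}: starting from a quantitative measure-theoretic lower bound on $\{v(\cdot,\eta)\ge \text{level}\}$ in $K_{2\rho}(x_o)$ — which is exactly what \eqref{Eq:low-bd1} gives, since $\mu[\dl(\rho)]^{1/(p-1)}\le\gm\bint v(x,\eta)\,dx$ forces $v(\cdot,\eta)$ to be bounded below by a fixed fraction of $\mu[\dl(\rho)]^{1/(p-1)}$ on a set of measure comparable to $|K_{2\rho}(x_o)|$ (using $0\le v\le\mu$) — one obtains, after an intrinsically scaled waiting time $\theta\rho^p$ with $\theta\sim c\,(\text{level})^{2-p}$, a pointwise lower bound $v\ge\sigma\cdot(\text{level})$ on $K_{2\rho}(x_o)$ at later times, with $\sigma=\sigma(\text{data})$.

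Concretely, the argument proceeds as follows. First, exactly as in Lemma~\ref{Lm:7:1}, I fix the interval $I=[t_o-3\widetilde\theta\rho^p,t_o]$ and check, using $0\le v\le\mu$ and $\widetilde\theta=c\mu^{2-p}$, that for every $\eta\in I$ the backward intrinsic waiting time $\theta\rho^p$ with $\theta=c[\bint_{K_{2\rho}(x_o)}v(x,\eta)\,dx]^{2-p}$ stays above $t_o-2\widetilde\theta(2\rho)^p$, so that the relevant estimates are applicable on the admissible cylinder $Q$. Next, combining \eqref{Eq:low-bd1} with $v\le\mu$ gives $|\{x\in K_{2\rho}(x_o): v(x,\eta)\ge \tfrac{1}{2\gm}\mu[\dl(\rho)]^{1/(p-1)}\}|\ge\alpha|K_{2\rho}(x_o)|$ for a fixed $\alpha=\alpha(\text{data})$. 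Feeding this into the expansion of positivity (with the uniform $p$-fatness of $\complement E$ used only insofar as it guarantees, via $\dl(\rho)\ge\gm_o$, that the level $\mu[\dl(\rho)]^{1/(p-1)}$ is comparable to $\mu$ — which is precisely why the conclusion is stated with $\mu$ rather than $\mu[\dl(\rho)]^{q_o}$), I get $v(\cdot,\tau)\ge\gm^{-1}\mu$ on $K_{2\rho}(x_o)$ for $\tau$ in a subinterval of length comparable to $\widetilde\theta\rho^p$ sitting just below $\eta+\theta\rho^p$. Finally, I run the same continuity/mean-value-theorem chaining device as in Lemma~\ref{Lm:7:1}: define $f(\eta)=\eta+c[\bint_{K_{2\rho}(x_o)}v(x,\eta)\,dx]^{2-p}\rho^p$, note $f\in C(I)$, verify $\min_I f\le t_o-2\widetilde\theta\rho^p$ and $\max_I f\ge t_o$, pick the smallest $t_*\in I$ with $f(t_*)=t_o$, and on $I_*=[t_o-3\widetilde\theta\rho^p,t_*]$ sweep out all of $J=[t_o-2\widetilde\theta\rho^p,t_o]$, concluding $\mu\le\widetilde\gm\inf_{K_{2\rho}(x_o)}v(\cdot,\tau)$ for all $\tau\in J$.

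The main obstacle I anticipate is getting the expansion-of-positivity machinery to produce a bound that is \emph{uniform in $\rho$} with the correct intrinsic time scale. The expansion of positivity in \cite[Chapter~4]{DBGV-mono} is naturally stated with a waiting time measured in the intrinsic scale $(\text{level})^{2-p}\rho^p$; one must check that this is compatible with the fixed scale $\widetilde\theta\rho^p=c\mu^{2-p}\rho^p$ used to define $I$ and $J$ — which works out precisely because $\dl(\rho)\ge\gm_o$ makes $\mu[\dl(\rho)]^{1/(p-1)}$ and $\mu$ comparable up to a constant depending on $\gm_o$, so the two time scales differ by a controlled constant factor (this is the only place $p$-fatness enters, and it is also the reason the final constant $\widetilde\gm$ depends on $\gm_o$). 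A secondary point to handle carefully is that in the singular range $1<p<2$ the expansion of positivity may a priori require the waiting time to be \emph{large enough} relative to the measure-density level; one checks that choosing $c$ possibly smaller (consistently with the definition $c=\min\{c_1,c_2\}$ in \S~\ref{S:low-b}) absorbs this. Beyond these calibration issues, every step is a routine transcription of the corresponding step in Lemma~\ref{Lm:7:1}, now with comparison-principle-free tools.
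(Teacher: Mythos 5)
Your proposal follows essentially the same route as the paper's proof: apply Lemma~\ref{Lm:5:1} on the interval $I$, use the uniform $p$-fatness ($\delta(\rho)\ge\gamma_o$) together with $0\le v\le\mu$ to turn the integral lower bound into a measure-theoretic lower bound at a level $\sigma\mu$ comparable to $\mu$, and then invoke the expansion of positivity \cite[Chapter~4, Proposition~5.1]{DBGV-mono} for the non-negative super-solution $v$ in place of Theorem~\ref{FV} and the comparison principle. The only notable deviations are cosmetic: since the measure bound is available at \emph{every} $\tau\in I$, the paper does not need the mean-value/chaining device of Lemma~\ref{Lm:7:1} that you retain, and the measure fraction (like the level) depends on $\gamma_o$ as well as the data, exactly as in the paper's choice $\gamma\sigma=\tfrac12\gamma_o^{1/(p-1)}$.
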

\begin{proof}
As in the proof of Lemma~\ref{Lm:7:1}, we define the interval $I$,
 the function $f\in C(I)$, and verify that Lemma~\ref{Lm:5:1}
 can be applied, i.e.
 \[
 \mu[\dl(\rho)]^{\frac1{p-1}}\le\gm\,\bint_{K_{2\rho}(x_o)}v(x,\tau)\,dx\quad\text{ for all }\tau\in I=[t_o-3\widetilde\theta\rho^p,t_o].
 \] 
 Our proof departs from here.
Let $\sig\in(0,1)$ to be fixed later.
We use the uniform $p$-fatness of the complement of $E$ 
to estimate for all $\tau\in I$ and $0<\rho<\bar\rho$,
\begin{align*}
\mu\gm_o^{\frac1{p-1}}&\le\gm\,\bint_{K_{2\rho}(x_o)}v(x,\tau)\,dx\\
&=\frac{\gm}{|K_{2\rho}|}\int_{K_{2\rho}(x_o)}v(x,\tau)\chi_{[v>\sig\mu]}\,dx+
\frac{\gm}{|K_{2\rho}|}\int_{K_{2\rho}(x_o)}v(x,\tau)\chi_{[v\le\sig\mu]}\,dx\\
&\le\gm\mu\frac{|[v(\cdot,\tau)>\sig\mu]\cap K_{2\rho}(x_o)|}{|K_{2\rho}|}+\gm\sig\mu.
\end{align*}
Now we choose $\gm\sig=\frac12\gm_o^{\frac1{p-1}}$, and after simple manipulation we arrive at
\[
|[v(\cdot,\tau)>\sig\mu]\cap K_{2\rho}(x_o)|\ge\sig|K_{2\rho}|\quad\text{ for all }\tau\in I.
\]
Since $v$ is a non-negative super-solution to \eqref{Eq:1:1}--\eqref{Eq:1:2}
in $Q$, an application of \cite[Chapter~4, Proposition~5.1]{DBGV-mono} yields the desired conclusion.
\end{proof}

Based on Lemma~\ref{Lm:8:1}, the proof of Corollary~\ref{cor:holder}
can be completed by an induction argument, just like in \cite{GLL}: we refrain from further elaboration, to avoid repetition.
%%%%%%%%%%%%%%%%%%%%%%%%%%%%%%%%%%%%%
\appendix
\appendixpage
\section{Capacity and Potential Theory}\label{S:cap}
The concept of capacity plays a key role in potential theory and particularly,
in understanding the local behavior of Sobolev functions. In a sense, it takes the place
of measure in Egorov and Lusin type theorems for Sobolev functions.
Decisive here for the boundary behavior of solutions to the parabolic $p$-laplacian type equations is the $p$-capacity defined by
\[
{\rm cap}_p(K, \Om)=\inf_{u\in W}\int_\Om |Du|^p\,dx
\]
where $K$ is a compact subset of the open set $\Om$ in $\rn$
and $W=\{u\in C_o^{\infty}(\Om):u\ge1\text{ on }K\}$.
After proper approximations, the above minimization could take place over $W^{1,p}_o(\Om)$ instead of $C_o^{\infty}(\Om)$
and the capacity of an arbitrary subset of $\Om$
can be formulated based on that of compact subsets, cf. \cite[Chapter~2]{HKM} and \cite[Chapter~2]{MZ}.

%\textcolor{blue}{
%When $q_o=\frac1{p-1}$ and one considers, the notion of a Wiener point $x_o\in\partial E$ is more commonly known in Potential Theory as {\it $p$-thickness of $E$ at $x_o$} for $p\in(1,N]$ 
For a domain $E\subset\rn$ and a point $x\in \rn$, in potential theory (cf.~\cite[Definition~6.3.7]{AH} and \cite[Definition~2.47]{MZ}) 
we say $E$ is $p$-thick at $x$ if
\[
\int_0^1\bigg[\frac{{\rm cap}_p\big(K_{\rho}(x)\cap E,K_{\frac32\rho}(x)\big)}{{\rm
    cap}_p\big(K_{\rho}(x),K_{\frac32\rho}(x)\big)}\bigg]^{\frac1{p-1}}\,\frac{d\rho}{\rho}=\infty.
\]
Even though the introduction of this concept was motivated by Wiener's celebrated work \cite{W} about the boundary regularity for the laplacian, nevertheless it has witnessed a development largely independent of the theory of partial differential equations.
%the $p$-fine topology associated with the Sobolev space $W^{1,p}$ (for details, see \cite[Chapter~2]{MZ} or \cite[Section~2]{frehse}). 
It is vacuous that the domain $E$ is $p$-thick at its interior point, while not so for points in $\overline{E}^c$.
Hence the most interesting part concerning thickness lies in the boundary $\pl E$. 
A deep result in potential theory--the Kellogg property--asserts that the domain $E$ is $p$-thick at every point of $\pl E$
except a set of zero $p$-capacity, cf.~\cite[\S~6.3]{AH}.
%This, together with Theorem~\ref{Thm:1:1}, implies that weak solutions to \eqref{Eq:1:6}
%are continuous at every $(x,\cdot)\in S_T$ after excluding a set in $\pl E$ of zero $(1+q_o^{-1})$-capacity.

The peculiar feature of the notion of thickness is that it is purely geometric; unfortunately, apart from easy situations, it is far from trivial to provide explicit examples of $p$-thickness. Perhaps, one of the most interesting cases is given by the so-called {\it outer corkscrew condition} (OCC): we say that $x_o\in\partial E$ satisfies (OCC), if there exist $M\ge2$ and $r_o>0$, such that for any $r\in(0,r_o)$ there exists $a_r\in E^c$ satisfying $\frac rM<|a_r-x_o|<r$ and $\dist(a_r,\partial E)>\frac rM$. Then, by \cite[Theorem~6.31]{HKM} it is not hard to prove that $x_o$ is a Wiener point. A domain $E$ that has an outward spike satisfies an outer corkscrew condition at the tip of the spike. Other examples of Wiener points are discussed in \cite[Corollary~11.25]{BB}.

\bye